\newcommand{\ZZ}{\mathbb{Z}}
\newcommand{\CC}{\mathbb{C}}
\newcommand{\NN}{\mathbb{N}}
\newcommand{\QQ}{\mathbb{Q}}
\newcommand{\Glie}{\mathfrak{g}}
\newcommand{\Yim}{\mathcal{Y}}
\newcommand{\Hlie}{\mathfrak{h}}
\newcommand{\U}{\mathcal{U}}
\newtheorem{thm}{Theorem}[section]
\newtheorem{defi}[thm]{Definition}
\newtheorem{cor}[thm]{Corollary}
\newtheorem{prop}[thm]{Proposition}
\newtheorem{lem}[thm]{Lemma}
\newtheorem{rem}[thm]{Remark}
\newtheorem{ex}[thm]{Example}
\title{
Cyclicity and $R$-matrices}
\author{David Hernandez}
\address{Sorbonne Paris Cit\'e, Univ Paris Diderot, CNRS Institut de
  Math\'ematiques de Jussieu-Paris Rive Gauche UMR 7586,
B\^atiment Sophie Germain, Case 7012,
75205 Paris Cedex 13, France.}
\email{david.hernandez@imj-prg.fr}
\begin{document} 

\begin{abstract} Let $S_1, \cdots, S_N$ simple finite-dimensional modules of a quantum affine algebra. We prove that if $S_i\otimes S_j$ is cyclic for any $i < j$ (i.e. generated by the tensor product of the highest weight vectors), then $S_1\otimes \cdots \otimes S_N$ is cyclic. The proof is based on the study of $R$-matrices.
\end{abstract}

\maketitle

\vskip 4.5mm

\noindent {\bf 2010 Mathematics Subject Classification:} 17B37 (17B10, 81R50).

\noindent {\bf Keywords:} Quantum affine algebras, cyclic modules, tensor product factorization.

\tableofcontents

\section{Introduction}

Let $q\in\CC^*$ which is not a root of unity and let $\U_q(\Glie)$ be the quantum affine algebra corresponding to an affine Kac-Moody algebra $\Glie$. 
Let $\mathcal{F}$ be the category of finite-dimensional representations of the algebra $\U_q(\Glie)$. 
The algebra $\U_q(\Glie)$ being a Hopf algebra, the category $\mathcal{F}$ has a tensor structure.
This tensor category has been studied from many points geometric, algebraic, combinatorial perspectives in connections to
various fields, see \cite{mo, kkko} for recent important developments, \cite{Hbou} for a recent review and 
\cite{cwy} for a very recent point of view in the context of physics.

$\mathcal{F}$ has a very intricated structure. It is not known if the category $\mathcal{F}$ is wild or tame. All simple objects in 
$\mathcal{F}$ have been classified by Chari-Pressley (see \cite{Cha2}), although many basic questions are still open, such as the dimension of 
the simple objects of $\mathcal{F}$ or the classification of its indecomposable objects. 
The aim of the present paper is to answer to one of these basic questions, namely the compatibility of tensor product with cyclicity.

A representation in $\mathcal{F}$ has a weight space decomposition with respect to the underlying finite type quantum group 
$\U_q(\overline{\Glie})\subset \U_q(\Glie)$. A module in $\mathcal{F}$ is said to be {\it cyclic} if it is generated by a highest weight vector 
with respect to $\U_q(\overline{\Glie})$. 

The main result of the paper is the following.

\begin{thm}\label{maincyc} For $S_1, \cdots, S_N$ simple finite-dimensional modules of a quantum affine algebra such that  $S_i\otimes S_j$ is cyclic for any $i < j$, the module $S_1\otimes \cdots \otimes S_N$ is cyclic. 
\end{thm}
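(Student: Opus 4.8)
The plan is to argue by induction on $N$, using normalized $R$-matrices as the bridge between cyclicity and a regularity (pole-free) condition. For $N=2$ there is nothing to prove. For the inductive step, set $T=S_2\otimes\cdots\otimes S_N$ and let $w=v_2\otimes\cdots\otimes v_N$ be the tensor product of the highest weight vectors of $S_2,\dots,S_N$. Since $S_i\otimes S_j$ is cyclic for all $2\le i<j\le N$, the inductive hypothesis gives that $T$ is cyclic, i.e. $T=\U_q(\Glie)\cdot w$. It therefore suffices to prove that $S_1\otimes T$ is generated by $v_1\otimes w$, where $v_1$ is the highest weight vector of $S_1$.

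First I would recall the $R$-matrix formalism. Deforming each module by a spectral parameter $z_k$, one has for simple modules $A,B$ a normalized $R$-matrix $\check R_{A,B}(z)\colon A\otimes B\to B\otimes A$, a rational intertwiner normalized so that the highest weight vector is sent to the highest weight vector, whose poles are the zeros of a denominator $d_{A,B}(z)$. The key dictionary I would establish (or invoke from the preliminaries) is the cyclicity criterion: the tensor product $A\otimes B$, evaluated at the given parameters, is cyclic if and only if $\check R_{A,B}$ is regular there. This reduces each pairwise hypothesis ``$S_1\otimes S_k$ is cyclic'' (with $1<k$) to the regularity of $\check R_{S_1,S_k}$ at the relevant specialization.

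Next I would use the factorization of the $R$-matrix in the second tensor slot. Moving the first factor past $S_2,S_3,\dots,S_N$ in turn realizes the intertwiner $S_1\otimes S_2\otimes\cdots\otimes S_N\to S_2\otimes\cdots\otimes S_N\otimes S_1$ as the composite $\check R_{S_1,S_N}\cdots\check R_{S_1,S_2}$, which is precisely the $R$-matrix $\check R_{S_1,T}$ attached to the pair $(S_1,T)$. Each factor $\check R_{S_1,S_k}$ is regular at the specialization by the previous step, hence so is their composite $\check R_{S_1,T}$ (a composite of regular maps is regular, so in this direction no cancellation of denominators has to be controlled). Thus $\check R_{S_1,T}$ is a genuine intertwiner sending $v_1\otimes w$ to $w\otimes v_1$.

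Finally I would transport cyclicity across this regular $R$-matrix: since $T$ is cyclic with generator $w$ and $\check R_{S_1,T}$ is regular, the highest weight vector $v_1\otimes w$ generates $S_1\otimes T$, so $S_1\otimes\cdots\otimes S_N$ is cyclic. The main obstacle is the $R$-matrix/cyclicity dictionary itself, and in particular its extension to the mixed pair $(S_1,T)$ where $T$ is cyclic but \emph{not} simple: I must show both that regularity of $\check R_{S_1,T}$ is the correct condition and that, given $T$ cyclic, this regularity forces $v_1\otimes w$ to generate the whole of $S_1\otimes T$. Establishing this generalized criterion --- rather than the essentially formal factorization and specialization steps --- is where the real work lies.
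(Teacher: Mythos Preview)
Your reduction is clean up to the last step, but that last step is the whole theorem. You correctly observe that each $\check R_{S_1,S_j}$ is regular (this is the easy direction ``cyclic $\Rightarrow$ regular'', which is all the paper's preliminaries provide), and that the composite $\check R_{S_1,T}$ is therefore regular. The gap is the inference ``$T$ cyclic and $\check R_{S_1,T}$ regular $\Rightarrow$ $S_1\otimes T$ cyclic''. You flag this yourself as ``where the real work lies'', but you give no mechanism for it, and in fact it is equivalent in strength to what you are proving: the only information you have extracted from the hypotheses is exactly the pairwise cyclicity, repackaged as regularity of a composite operator. Even for two simple modules the implication ``$R$-matrix regular $\Rightarrow$ cyclic'' is the harder direction and is not among the tools set up here; for non-simple $T$ there is a further difficulty, since $\mathrm{End}(T)\neq\CC$ means there is no canonical normalized intertwiner $S_1(z)\otimes T\to T\otimes S_1(z)$ to which a pole/regularity criterion could be attached --- you only have the particular composite, and no general principle says its regularity forces cyclicity of the source.

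The paper breaks this circularity by a different induction and an extra structural input. It introduces a filtration of the category by subcategories $\mathcal{C}_\ell$ and runs a double induction on $N$ and on $\ell$. Each simple $S_i$ is factored via a surjection $S_{i,+}\otimes S_{i,-}\twoheadrightarrow S_i$, where $S_{i,-}$ is a product of fundamentals at the bottom spectral parameter and $S_{i,+}$ lies in $\mathcal{C}_{\ell-1}$. A $q$-character comparison (Proposition~\ref{new}, using Theorem~\ref{useqt}) shows that cyclicity of $S\otimes S'$ forces cyclicity of $S_+\otimes S'_+$, which feeds the induction on $\ell$. One then proves that the auxiliary module $V=S_{1,+}\otimes S_2\otimes\cdots\otimes S_N\otimes S_{1,-}$ is cyclic (Lemma~\ref{vcyc}), and passes from $V$ to $S_1\otimes\cdots\otimes S_N$ by moving $S_{1,-}$ leftward through a chain of maps $\mathcal{I}_j$. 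Each $\mathcal{I}_j$ is shown to be surjective not via any ``regular $\Rightarrow$ cyclic'' criterion, but by deforming a single tensor factor by $z$, conjugating by the invertible $\mathcal{T}_{S_{1,+},S_j}(z)$ into a situation already handled (via the Yang-Baxter relation and the induction on $N$), and then specializing $z\to 1$ through a lattice argument over the local ring $\mathcal{A}$ at $z=1$. The $R$-matrices enter only as deformation and conjugation devices; the cyclicity itself is extracted from the $q$-character input and the double induction, which is precisely the missing idea in your outline.
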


A proof is given in this paper and is based on a detailed study of certain intertwining operators in the category $\mathcal{F}$ derived from $R$-matrices.

A reader less familiar with the representation theory of quantum affine algebras may wonder why such a result is non trivial. 
Indeed, in the category of finite-dimensional representations of $\U_q(\overline{\Glie})$ or of $\overline{\Glie}$, a 
tensor product of simple representations is never cyclic, unless one of the modules in question is one-dimensional. 
But in the category $\mathcal{F}$, a tensor product of two simple modules is generically simple and so cyclic. Moreover 
the category $\mathcal{F}$ is non semi-simple and there are "many" non simple cyclic tensor products. 

For instance, an important result proved in \cite{Chari2, mk, VaVa3} (see Theorem \ref{cyc} below) gives a sufficient condition for a tensor product
of fundamental representations in $\mathcal{F}$ to be cyclic. This implies a particular case of our Theorem \ref{maincyc} for 
such tensor products. This cyclicity result has been extended in \cite{Chari2} for Kirillov-Reshetikhin modules and in \cite{jm}
for some more general simple modules in the $sl_2$-case.

Our Theorem \ref{maincyc} may also be seen as a generalization of the main result of \cite{h4} (if $S_i\otimes S_j$ is simple for any $i < j$, then the module $S_1\otimes \cdots \otimes S_N$ is simple). In fact Theorem \ref{maincyc} implies this result as a cyclic module whose dual is cyclic is simple (see \cite[Section 4.10]{cp}).

The result in the present paper is much more general, even in the $\hat{sl_2}$-case. Our proof is valid for arbitrary simple objects of $\mathcal{F}$ and for arbitrary $\Glie$.

Our result is stated in terms of the tensor structure of $\mathcal{F}$. Thus, it is purely representation theoretical. But we have additional motivations.

Although the category $\mathcal{F}$ is not braided, $\U_q(\Glie)$ has a {\it universal $R$-matrix} in a completion of the tensor product $\U_q(\Glie)\otimes \U_q(\Glie)$. In general the universal $R$-matrix can not be specialized to finite-dimensional representations, but when 
$V\otimes V'$ is cyclic, we get a well-defined morphism 
$$\mathcal{I}_{V,V'} : V\otimes V'\rightarrow V' \otimes V.$$ 
In fact, it is expected that the localization of the poles of $R$-matrices (more precisely of meromorphic deformations
of $R$-matrices, see below) is related to the irreducibility and to the 
cyclicity of tensor product of simple representations. This is an important question from the point of view
of mathematical physics. The interest for such $R$-matrices was revived recently with the fundamental
work of Maulik-Okounkov on stable envelopes \cite{mo}.

The existence of a cyclic tensor product $A\otimes B$ is related to the existence of certain non-split short exact sequences in $\mathcal{F}$ of the form 
$$0\rightarrow V\rightarrow A\otimes B\rightarrow W\rightarrow 0.$$
These are crucial ingredients for categorification theory as they imply remarkable relations in the Grothendieck
ring of the category $\mathcal{F}$ of the form
$$[A]\times [B] = [V] + [W].$$ 
Examples include the $T$-systems \cite{Nkr, Hkr} or	certain Fomin-Zelevinsky cluster mutations \cite{hl, kkko, hl2}.

The classification of indecomposable object in $\mathcal{F}$ is not known, even in the $\hat{sl}_2$-case.
Although by \cite{bn} the maximal cyclic modules are known to be the Weyl modules 
(the cyclic tensor product of fundamental representations), the classification of cyclic modules is unknown. The main result of the present paper
is a first step in this direction as it provides a factorization into a tensor product of simple
modules of a large family of cyclic modules. Such problems of factorization into simple or prime simple
modules (i.e. which can not be written as a tensor product of non trivial simple objects) is an
open problem for $\mathcal{F}$ and is one of the main motivation in \cite{hl}. This is also related to the problem of monoidal categorification of cluster algebras.

The results of the present paper are used by Gurevich \cite{G} in his proof of one direction of the  Gan-Gross-Prasad conjecture for 
p-adic groups $GL_N$.

The paper is organized as follows. In Section \ref{un} we give reminders on quantum affine algebra, its category $\mathcal{F}$ of finite-dimensional representations and the corresponding ($q$)-character theory. We recall a result on cyclicity of tensor products of fundamental representations (Theorem \ref{cyc}), a result on tensor product of ($\ell$)-weight vectors (Theorem \ref{prodlweight}) and the main properties of intertwiners and their deformations. In Section \ref{redu} we reduce the problem by considering certain remarkable subcategories which have a certain compatibility property with characters (Theorem \ref{useqt}). In Section \ref{endpf} we end the proof of Theorem \ref{maincyc}.

{\bf Acknowledgments :}  The author would like to thank B. Leclerc and A. Moura for interesting comments and discussions. 
The author is supported by the European Research Council under the European Union's Framework Programme H2020 with ERC Grant Agreement number 647353 Qaffine.

\section{Finite-dimensional representations of quantum affine algebras}\label{un}

Following \cite{h4}, we recall the main definitions and properties of finite-dimensional representations of quantum affine algebras. In particular we discuss the corresponding $(q)$-character theory, the intertwiners derived from $R$-matrices, their deformations and an important cyclicity result.

\subsection{Quantum affine algebras}  

All vector spaces, algebras and tensor products are defined over $\CC$.

Let $C = (C_{i,j})_{0\leq i,j\leq n}$ be a {\it generalized Cartan matrix} \cite{kac}, i.e. for $0\leq i,j\leq n$
we have $C_{i,j}\in\ZZ$, $C_{i,i} = 2$, and for $0\leq i\neq j\leq n$ we have $C_{i,j} \leq 0$, $(C_{i,j} = 0 \Leftrightarrow C_{j,i} = 0)$. We suppose that $C$ is {\it indecomposable}, i.e. there is no proper $J\subset \{0,\cdots, n\}$ such that $C_{i,j} = 0$ for any $(i,j)\in J\times (\{0,\cdots, n\}\setminus J)$. Moreover we suppose that $C$ is of {\it affine type}, i.e. all proper principal minors of $C$ are strictly positive and $\text{det}(C) = 0$. 
By the general theory in \cite{kac}, $C$ is {\it symmetrizable}, that is there is a diagonal matrix with rational coefficients $D = \text{diag}(r_0,\cdots,r_n)$ such that $DC$ is symmetric. 
Fix $h\in\CC$ satisfying $q = e^h$. Then $q^p = e^{hp}$ is well-defined for any $p\in\QQ$.

The {\it quantum affine algebra} $\U_q(\Glie)$ is defined by generators $k_i^{\pm 1}$, $x_i^{\pm}$ ($0\leq i\leq n$) and 
relations
$$k_ik_j=k_jk_i\text{ , } k_ix_j^{\pm}=q^{\left(\pm r_i C_{i,j}\right)}x_j^{\pm}k_i\text{ , }[x_i^+,x_j^-]=\delta_{i,j}\frac{k_i-k_i^{-1}}{q^{r_i}-q^{-r_i}},$$
$$\underset{p=0\cdots 1-C_{i,j}}{\sum}(-1)^p(x_i^{\pm})^{\left(1-C_{i,j}-p\right)}x_j^{\pm}(x_i^{\pm})^{(p)}=0 \text{ (for $i\neq j$)}
,$$
where we denote $\left(x_i^{\pm}\right)^{(p)} = \left(x_i^{\pm}\right)^p/[p]_{q^{r_i}}!$ for $p\geq 0$. We use the standard $q$-factorial notation $[p]_q ! = [p]_q [p-1]_q \cdots [1]_q = (q^p - q^{-p})(q^{p-1} - q^{1 - p})\cdots (q - q^{-1}) (q - q^{-1})^{-r}$. The $x_i^\pm$, $k_i^{\pm 1}$ are called {\it Chevalley generators}.

We use the coproduct $\Delta : \U_q(\Glie)\rightarrow \U_q(\Glie)\otimes \U_q(\Glie)$ defined for $0\leq i\leq n$ by
$$\Delta(k_i)=k_i\otimes k_i\text{ , }\Delta(x_i^+)=x_i^+\otimes 1 + k_i\otimes x_i^+\text{ , }\Delta(x_i^-)=x_i^-\otimes k_i^{-1} + 1\otimes x_i^-.$$ 
This is the same choice as in \cite{da, Chari2, Fre2}\footnote{In \cite{mk} another coproduct is used. We recover the coproduct used in the present paper by taking the opposite coproduct and changing $q$ to $q^{-1}$.}.

\subsection{Drinfeld generators}\label{rappel}

The indecomposable affine Cartan matrices are classified \cite{kac} into two main classes,
{\it twisted} types and {\it untwisted} types. The latest includes {\it simply-laced} types
and {\it untwisted non simply-laced} types. The type of $C$ is denoted by $X$. We use the numbering of nodes as in \cite{kac} if $X\neq A_{2n}^{(2)}$, and we use the reversed numbering if $X = A_{2n}^{(2)}$. 
Let $r$ be the {\it twisting order} of $\Glie$, that is $r = 1$ is the untwisted cases, otherwise $r = 2$ if $X\neq D_4^{(3)}$ and $r = 3$ if $X = D_4^{(3)}$.

 We set $\mu_i = 1$ for $0\leq i\leq n$, except when $(X,i) = (A_{2n}^{(2)}, n)$ where we set $\mu_n = 2$. Without loss of generality, we can choose the $r_i$ so that $\mu_i r_i\in \NN^*$ for any $i$ and $\left(\mu_0 r_0\wedge\cdots\wedge \mu_n r_n\right) = 1$ (there is a unique such choice). 
Let $i\in I = \{1,\cdots, n\}$. If $r = r_i > 1$ we set $d_i = r_i$. We set $d_i = 1$ otherwise. So for the twisted types we have $d_i = \mu_ir_i$, and for the untwisted types we have $d_i = 1$. 

 Let $\overline{\Glie}$ be the finite-dimensional simple Lie algebra of Cartan matrix $(C_{i,j})_{i,j\in I}$. We denote respectively by $\omega_i$, $\alpha_i$, $\alpha_i^\vee$ ($i\in I$) the fundamental weights, the simple roots and the simple coroots of $\overline{\Glie}$.
We use the standard partial ordering $\leq$ on the weight lattice $P$ of $\overline{\Glie}$. 

The algebra $\U_q(\Glie)$ has another set of generators, called {\it Drinfeld generators}, denoted by 
$$x_{i,m}^\pm\text{ , }k_i^{\pm 1}\text{ , }h_{i,r}\text{ , }c^{\pm 1/2}\text{ for $i\in I$, $m\in \ZZ$, $r\in\ZZ \setminus \{0\}$,}$$ 
and defined from the Chevalley generators by using the action of Lusztig automorphisms of $\U_q(\Glie)$ (in \cite{bec} for the untwisted types and in \cite{da} for the twisted types).
We have $x_i^\pm = x_{i,0}^\pm$ for $i\in I$. A complete set of relations for Drinfeld relations was obtained in \cite{bec, bcp, da2}. In particular the multiplication defines a surjective linear morphism
\begin{equation}\label{trian}\U_q^-(\Glie)\otimes \U_q(\Hlie)\otimes \U_q^+(\Glie)\rightarrow \U_q(\Glie)\end{equation}
where $\U_q^\pm(\Glie)$ is the subalgebra generated by the $x_{i,m}^\pm$ ($i\in I$, $m\in\ZZ$) and $\U_q(\Hlie)$ is the subalgebra generated by the $k_i^{\pm 1}$, the $h_{i,r}$ and $c^{\pm 1/2}$ ($i\in I$, $r\in\ZZ\setminus\{0\}$).

\subsection{Finite-dimensional representations}\label{fdrap}

For $i\in I$, the action of $k_i$ on any object of $\mathcal{F}$ is diagonalizable with eigenvalues in $\pm q^{r_i\ZZ}$.
Without loss of generality, we can assume that $\mathcal{F}$ is the category of {\it type 1} finite-dimensional representations (see \cite{Cha2}), i.e. we assume that for any object of $\mathcal{F}$, the eigenvalues of $k_i$ are in $q^{r_i \ZZ}$ for $i\in I$.

The simple objects of $\mathcal{F}$ have been classified by Chari-Pressley \cite{cp, Cha2, Cha5} (see \cite{h8} for
some complements in the twisted cases). The simple objects are parametrized by $n$-tuples of polynomials $(P_i(u))_{i\in I}$ satisfying $P_i(0) = 1$ (they
are called {\it Drinfeld polynomials}).

For $\omega\in P$, the {\it weight space} $V_\omega$ of an object $V$ in $\mathcal{F}$ is the set of {\it weight vectors} of weight $\omega$, i.e. of vectors $v\in V$ satisfying $k_i v = q^{\left(r_i \omega(\alpha_i^\vee)\right)} v$ for any $i\in I$. Thus, we have the weight decomposition 
$$V = \bigoplus_{\omega \in P} V_\omega.$$ 
Let us define the $\ell$-weight weight decomposition which is its refinement.

The elements $c^{\pm 1/2}$ acts by identity on any object $V$ of $\mathcal{F}$, and so the action of
the $h_{i,r}$ commute. Since the $h_{i,r}$, $i\in I$, $r\in\mathbb{Z}\setminus \{0\}$, also commute with the $k_i$, $i\in I$, every object in $\mathcal{F}$ can be decomposed as a direct sum of generalized eigenspaces of the $h_{i,r}$ and $k_i$. More precisely, by Frenkel-Reshetikhin theory of $q$-characters \cite{Fre}, the eigenvalues of the $h_{i,r}$ and $k_i$ can be {\it encoded} by {\it monomials} $m$ in formal variables $Y_{i,a}^{\pm 1}$ ($i\in I, a\in\CC^*$). The construction\footnote{For the twisted types there is a modification of the theory and we consider two kinds of variables in \cite{h8}. For homogeneity of notations, the $Y_{i,a}$ in the present paper are the $Z_{i,a}$ of \cite{h8}. We do not use in this paper the variables denoted by $Y_{i,a}$ in \cite{h8}.} is extended to twisted types in \cite{h8}. Let $\mathcal{M}$ be the set of such monomials (also called {\it $l$-weights}). Given $m\in\mathcal{M}$ and an object $V$ in $\mathcal{F}$, 
let $V_m$ be the corresponding generalized eigenspace of $V$ (also called {\it $l$-weight spaces}); thus, 
$$V = \bigoplus_{m\in\mathcal{M}} V_m.$$

As mentioned above, the decomposition in $l$-weight spaces is finer than the decomposition in weight spaces. Indeed, if $v\in V_m$, then $v$ is a weight vector of weight 
$$\omega(m) = \sum_{i\in I, a\in\CC^*} u_{i,a}(m) \mu_i\omega_i\in P,$$ 
where we denote $m = \prod_{i\in I, a\in\CC^*}Y_{i,a}^{u_{i,a}(m)}$. For $v\in V_m$, we set $\omega(v) = \omega(m)$.

The {\it $q$-character morphism} is an injective ring morphism
$$\chi_q : \text{Rep}(\U_q(\Glie)) \rightarrow \Yim = \ZZ\left[Y_{i,a}^{\pm 1}\right]_{i\in I, a\in\CC^*},$$
$$\chi_q(V) = \sum_{m\in \mathcal{M}} \text{dim}(V_m) m.$$
\begin{rem}\label{sumprod} For any $i\in I$, $r\in\ZZ\setminus\{0\}$, $m,m'\in\mathcal{M}$, the eigenvalue of $h_{i,r}$ associated to $mm'$ is the sum of the eigenvalues of $h_{i,r}$ associated respectively to $m$ and $m'$ \cite{Fre, h8}.
\end{rem}

If $V_m\neq \{0\}$ we say that $m$ is an {\it $l$-weight of $V$}.
A vector $v$ belonging to an $l$-weight space $V_m$ is called an {\it $l$-weight vector}. We denote $M(v) = m$ the {\it $l$-weight of $v$}.
A {\it highest $l$-weight vector} is an $l$-weight vector $v$ satisfying $x_{i,p}^+ v = 0$ for any $i\in I$, $p\in\ZZ$.

A monomial $m\in\mathcal{M}$ is said to be {\it dominant} if $u_{i,a}(m)\geq 0$ for any $i\in I, a\in\CC^*$. For $V$ a simple object in $\mathcal{F}$, let $M(V)$ be the {\it highest weight monomial} of $\chi_q(V)$, that is so that $\omega(M(V))$ is maximal for the partial ordering on $P$. $M(V)$ is dominant and characterizes the isomorphism class of $V$ (it is equivalent to the data of the Drinfeld polynomials). Hence to a dominant monomial $M$ is associated a simple representation $L(M)$. For $i\in I$ and $a\in\CC^*$, we define the fundamental representation 
$$V_i(a) = L\left(Y_{i,a^{d_i}}\right).$$

\begin{ex} The $q$-character of the fundamental representation $L(Y_a)$ of $\U_q(\hat{sl_2})$ is 
$$\chi_q(L(Y_a)) = Y_a + Y_{aq^2}^{-1}.$$
\end{ex}

Let $i\in I, a\in\CC^*$ and let us define the monomial $A_{i,a}$ analog of a simple root.
For the untwisted cases, we set \cite{Fre}
$$A_{i,a} = Y_{i,aq^{-r_i}}Y_{i,aq^{r_i}}\times \left(\prod_{\left\{j\in I|C_{i,j} = -1\right\}}Y_{j,a}\right)^{-1}$$
$$\times\left(\prod_{\left\{j\in I|C_{i,j} = -2\right\}}Y_{j,aq^{-1}}Y_{j,aq}\right)^{-1}\times\left(\prod_{\left\{j\in I|C_{i,j} = -3\right\}}Y_{j,aq^{-2}}Y_{j,a}Y_{j,aq^2}\right)^{-1}.
$$
Recall $r$ the twisting number defined above. Let $\epsilon$ be a primitive $r$th root of $1$. We now define $A_{i,a}$ for the twisted types as in \cite{h8}. 
\\If $(X,i)\neq (A_{2n}^{(2)}, n)$ and $r_i = 1$, we set
$$A_{i,a} = Y_{i,aq^{-1}}Y_{i,aq}\times\left(\prod_{\left\{j\in I | C_{i,j} < 0\right\}}Y_{j,a^{\left(r_jC_{j,i}\right)}}\right)^{-1}.$$
If $(X,i)\neq (A_{2n}^{(2)}, n)$ and $r_i > 1$, we set
$$A_{i,a} = Y_{i,aq^{-r_i}}Y_{i,aq^{r_i}}\times\left(\prod_{\left\{j\in I | C_{i,j} < 0,r_j = r\right\}}Y_{j,a}\right)^{-1}
\times\left(\prod_{\left\{j\in I | C_{i,j} < 0, r_j = 1\right\}}\left(\prod_{\left\{b\in\CC^*|(b)^r = a\right\}} Y_{j,b}\right)\right)^{-1}.$$
If $(X,i) = (A_{2n}^{(2)}, n)$, we set $
A_{n,a} = \begin{cases}Y_{n,aq^{-1}}Y_{n,aq}Y_{n,-a}^{-1}Y_{n-1,a}^{-1}\text{ if $n > 1$,} 
\\Y_{1,aq^{-1}}Y_{1,aq}Y_{1,-a}^{-1}\text{ if $n = 1$.}\end{cases}$

For $i\in I$, $a\in\CC^*$, we have \cite{Fre2, h8} :
\begin{equation}\label{fundcar}\chi_q(V_i(a)) \in Y_{i,a^{d_i}} + Y_{i,a^{d_i}}A_{i,\left(a^{d_i} q^{\mu_i r_i}\right)}^{-1}\ZZ\left[A_{j,\left(a\epsilon^k q^r\right)^{d_j}}^{-1}\right]_{j\in I, k\in\ZZ,r>0}.\end{equation}
As a simple module $L(m)$ is a subquotient of a tensor product of fundamental representations, this implies 
$$\chi_q(L(m))\in m\ZZ[A_{i,a}^{-1}]_{i\in I, a\in\CC^*}.$$

\subsection{Properties of $\ell$-weight vectors}
Let $\U_q(\Hlie)^+$ be the subalgebra of $\U_q(\Hlie)$ generated by the $k_i^{\pm 1}$ and the $h_{i,r}$ ($i\in I, r > 0$). The $q$-character and the decomposition in $l$-weight spaces of a representation in $\mathcal{F}$ is completely determined by the action of $\U_q(\Hlie)^+$ \cite{Fre, h8}. Therefore one can define the $q$-character $\chi_q(W)$ of a $\U_q(\Hlie)^+$-submodule $W$ of an object in $\mathcal{F}$.

The following result describes a condition on the $l$-weight of a linear combination of pure tensor products of weight vectors.

\begin{thm}\cite{h4}\label{prodlweight}
Let $V_1, V_2$ in $\mathcal{F}$ and consider an $l$-weight vector
$$w = \left(\sum_{\alpha} w_\alpha\otimes v_\alpha\right) + \left(\sum_\beta w_\beta'\otimes v_\beta'\right)\in V_1\otimes V_2$$ 
satisfying the following conditions.

(i) The $v_\alpha$ are $l$-weight vectors and the $v_\beta'$ are weight vectors.

(ii) For any $\beta$, there is an $\alpha$ satisfying $\omega(v_\beta') > \omega(v_\alpha)$.

(iii) For $\omega\in\{\omega(v_\alpha)\}_\alpha$, we have $\sum_{\left\{\alpha|\omega(v_{\alpha}) = \omega\right\}} w_{\alpha}\otimes v_{\alpha}\neq 0$. 

\noindent Then $M(w)$ is the product of one $M(v_\alpha)$ by an $l$-weight of $V_1$.
\end{thm}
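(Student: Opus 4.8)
The plan is to probe $w$ with the commuting family of operators $h_{i,r}$ ($i\in I$, $r>0$) together with the $k_i$, which jointly determine the $l$-weight of any vector by the Frenkel--Reshetikhin theory \cite{Fre, h8}. Write $m = M(w)$ and $m_\alpha = M(v_\alpha)$, so that $\omega(v_\alpha) = \omega(m_\alpha)$. The essential structural input is the upper-triangular shape of the coproduct on $\U_q(\Hlie)^+$ with respect to the weight filtration of the second factor: for $g\in\U_q(\Hlie)^+$ and $v_2\in V_2$ a weight vector of weight $\omega_2$, the operator $\Delta(g)$ maps $V_1\otimes v_2$ into $V_1\otimes\bigoplus_{\omega\geq\omega_2}(V_2)_\omega$, and if moreover $v_2$ is an $l$-weight vector of $l$-weight $m_2$, then the weight-preserving component of $\Delta(h_{i,r})(v_1\otimes v_2)$ equals $(h_{i,r}v_1)\otimes v_2 + \lambda_{i,r}(m_2)\,v_1\otimes v_2$, where $\lambda_{i,r}(m_2)$ is the scalar by which $h_{i,r}$ acts on $v_2$. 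In other words the coproduct corrections strictly raise the weight of the second tensor factor. This property (which follows from the explicit form of $\Delta$ on the Drinfeld generators, cf. \cite{h4}) together with the additivity of eigenvalues in Remark \ref{sumprod} is the device that converts an eigenvalue equation in $V_1\otimes V_2$ into one in $V_1$.

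First I would select a minimal element $\omega_0$ of the set $\{\omega(v_\alpha)\}_\alpha$ for the partial ordering on $P$. I claim no primed term can contribute to the second-factor weight level $\omega_0$, even after applying an operator of $\U_q(\Hlie)^+$. Indeed, by the triangular property above, $\Delta(h_{i,r})(w_\beta'\otimes v_\beta')$ lands in $V_1\otimes\bigoplus_{\omega\geq\omega(v_\beta')}(V_2)_\omega$, so a contribution at level $\omega_0$ would force $\omega_0\geq\omega(v_\beta')$; but by condition (ii) there is $\alpha$ with $\omega(v_\beta')>\omega(v_\alpha)$, whence $\omega_0\geq\omega(v_\beta')>\omega(v_\alpha)$ would contradict the minimality of $\omega_0$ in $\{\omega(v_\alpha)\}_\alpha$. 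Next, decomposing $(V_2)_{\omega_0}$ into its $l$-weight spaces, condition (iii) guarantees that $\sum_{\omega(v_\alpha)=\omega_0}w_\alpha\otimes v_\alpha\neq 0$, so there is an $l$-weight $m^{(2)}$ of $V_2$ with $\omega(m^{(2)})=\omega_0$ and with nonzero projection $\pi(w)$ of $w$ onto $V_1\otimes(V_2)_{m^{(2)}}$. Here $m^{(2)} = M(v_\alpha)$ for some $\alpha$.

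Now I would apply $h_{i,r}$ and $k_i$ to the identity $h_{i,r}w = \lambda_{i,r}(m)w$ (and its $k_i$-analogue), and project via $\pi$. By the two previous paragraphs, on the level $\omega_0$ the primed terms and the terms with $\omega(v_\alpha)\neq\omega_0$ or $M(v_\alpha)\neq m^{(2)}$ contribute nothing to $V_1\otimes(V_2)_{m^{(2)}}$, while each matching term contributes through its weight-preserving component. Writing $\pi(w)=\sum_j u_j\otimes v_j$ with $\{v_j\}$ a basis of $(V_2)_{m^{(2)}}$ and $u_j\in V_1$, the triangular formula gives $\pi(h_{i,r}w)=\sum_j\bigl(h_{i,r}u_j+\lambda_{i,r}(m^{(2)})u_j\bigr)\otimes v_j$, so comparing with $\lambda_{i,r}(m)\,\pi(w)$ and using linear independence of the $v_j$ yields, for every $j$,
$$h_{i,r}u_j = \bigl(\lambda_{i,r}(m)-\lambda_{i,r}(m^{(2)})\bigr)u_j,$$
and likewise $k_i u_j$ is determined. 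Thus each nonzero $u_j$ is a simultaneous eigenvector of all $h_{i,r}$ and $k_i$; by Remark \ref{sumprod} its eigenvalues are exactly those attached to the monomial $m\,(m^{(2)})^{-1}$, so $u_j$ lies in the $l$-weight space $(V_1)_{m(m^{(2)})^{-1}}$. Since some $u_j\neq 0$, this space is nonzero, i.e. $m(m^{(2)})^{-1}$ is an $l$-weight of $V_1$, and therefore $m = \bigl(m(m^{(2)})^{-1}\bigr)\cdot M(v_\alpha)$ is the product of one $M(v_\alpha)$ by an $l$-weight of $V_1$, as desired. The main obstacle is the first step: pinning down the triangular behaviour of $\Delta$ on $\U_q(\Hlie)^+$ precisely enough to control which second-factor weight spaces receive contributions, and carrying out the accompanying weight bookkeeping for the primed terms so that condition (ii) exactly excludes their interference at the minimal level.
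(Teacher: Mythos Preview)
The paper does not itself prove this statement; it is quoted from \cite{h4}. Your outline follows the same strategy as the original proof there: exploit the upper-triangularity of $\Delta$ on $\U_q(\Hlie)^+$ with respect to the weight grading of the second tensor factor, pick a minimal weight $\omega_0$ among the $\omega(v_\alpha)$, argue via (ii) that the primed terms cannot reach the $\omega_0$-level, and read off the $l$-weight of the first factor from the projected equation. So the approach is the right one.

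There is, however, one genuine slip. By the definition recalled in Section~\ref{fdrap}, an $l$-weight vector lies in a \emph{generalized} eigenspace for the $h_{i,r}$, not an eigenspace. Hence the identity $h_{i,r}w=\lambda_{i,r}(m)w$ that you invoke need not hold, and likewise $h_{i,r}$ need not act by the scalar $\lambda_{i,r}(m^{(2)})$ on the chosen basis vectors $v_j$. As written, your eigenvalue comparison ``$h_{i,r}u_j=(\lambda_{i,r}(m)-\lambda_{i,r}(m^{(2)}))u_j$'' is therefore unjustified.

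The fix is the standard one and is how the argument is run in \cite{h4}. Replace the naive projection by a filtration: the subspace $G=V_1\otimes\bigoplus_{\omega\not\le\omega_0}(V_2)_\omega$ is $\Delta(\U_q(\Hlie)^+)$-stable by triangularity, your minimality argument shows $w\in G+V_1\otimes(V_2)_{\omega_0}$, and on the quotient $(G+V_1\otimes(V_2)_{\omega_0})/G\simeq V_1\otimes(V_2)_{\omega_0}$ the induced action of $h_{i,r}$ is exactly $h_{i,r}\otimes 1+1\otimes h_{i,r}$. Then the image $\pi(w)$ is a \emph{generalized} eigenvector for this operator with generalized eigenvalue $\lambda_{i,r}(m)$; projecting further to $V_1\otimes(V_2)_{m^{(2)}}$ (which is stable) and using that $1\otimes h_{i,r}$ commutes with $h_{i,r}\otimes 1$ and has the single generalized eigenvalue $\lambda_{i,r}(m^{(2)})$ there, one concludes that $h_{i,r}\otimes 1$ has generalized eigenvalue $\lambda_{i,r}(m)-\lambda_{i,r}(m^{(2)})$ on $\pi(w)$. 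This places each $u_j$ in the $l$-weight space $(V_1)_{m(m^{(2)})^{-1}}$, as you want, and the rest of your argument goes through unchanged.
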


\subsection{Cyclicity and intertwiners}\label{interw}

We have the following cyclicity result \cite{Chari2, mk, VaVa3} discussed in the in the introduction. We set $\NN^* = \NN\setminus\{0\}$.

\begin{thm}\label{cyc}\label{thK} Suppose that $a_1,\cdots, a_R\in \CC^*$ satisfy 
$a_p a_s^{-1}\notin \epsilon^\ZZ q^{-\NN^*}$ for all $1\leq s < p \leq R$. Then for any 
$i_1,\cdots, i_R\in I$ the module 
$$V_{i_R}(a_R)\otimes \cdots \otimes V_{i_1}(a_1)$$ 
is cyclic. Moreover, there exists a unique, up to a scalar, non-zero morphism
$$V_{i_R}(a_R)\otimes \cdots \otimes V_{i_1}(a_1) \rightarrow V_{i_1}(a_1)\otimes \cdots \otimes V_{i_R}(a_R).$$
Its image is simple isomorphic to $L\left(Y_{i_1,(a_1)^{d_{i_1}}}\cdots Y_{i_R,(a_R)^{d_{i_R}}}\right)$.
\end{thm}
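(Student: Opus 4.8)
The plan is to prove cyclicity by induction on $R$, organizing everything around the family of normalized intertwiners (R-matrices). For two simple modules $M,N$ in $\mathcal{F}$ with highest $\ell$-weight vectors $u_M,u_N$, one has a meromorphic intertwiner $\bar R_{M,N}:M\otimes N\rightarrow N\otimes M$, rational in the spectral parameter and normalized so that $\bar R_{M,N}(u_M\otimes u_N)=u_N\otimes u_M+(\text{strictly lower weight terms})$; its only singularities occur at a finite set of spectral values, the zero locus of an explicit denominator. Peeling off the rightmost tensorand reduces the theorem to a single two-factor statement: if $A$ is cyclic with highest $\ell$-weight vector $u_A$ (the inductive hypothesis applied to $V_{i_R}(a_R)\otimes\cdots\otimes V_{i_2}(a_2)$, whose sub-family inherits the spectral hypothesis) and $B=V_{i_1}(a_1)$ is fundamental, then $A\otimes B$ is generated by $u_A\otimes v_1^+$ under the hypothesis that $a_p a_1^{-1}\notin\epsilon^\ZZ q^{-\NN^*}$ for all $p>1$.

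For this two-factor step I would first note that $u_A\otimes v_1^+$ is a highest $\ell$-weight vector of $A\otimes B$, so it is annihilated by $\U_q^+(\Glie)$ and fixed up to scalars by $\U_q(\Hlie)$; by the triangular decomposition \eqref{trian} this gives $\U_q(\Glie)\cdot(u_A\otimes v_1^+)=\U_q^-(\Glie)\cdot(u_A\otimes v_1^+)$. The task is therefore to show that applying the Drinfeld lowering generators $x_{k,m}^-$ to $u_A\otimes v_1^+$ spans all of $A\otimes B$. Expanding the (nontrivial) coproduct of $x_{k,m}^-$ in the Drinfeld realization, the leading terms reproduce $A\otimes v_1^+$ and $u_A\otimes B$, and the cross terms recover the remaining weight spaces provided a single structure coefficient does not vanish. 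That coefficient is precisely the value at $a_p a_1^{-1}$ of the denominator governing $\bar R_{B,A}$; the hypothesis $a_p a_1^{-1}\notin\epsilon^\ZZ q^{-\NN^*}$ places us on the regular side of this denominator, so the coefficient is nonzero and $u_A\otimes v_1^+$ generates.

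The step I expect to be the genuine obstacle is the location of this denominator's zeros. One must show that, for the orientation relevant to the ordered product $V_{i_R}(a_R)\otimes\cdots\otimes V_{i_1}(a_1)$, the singularities of the pairwise normalized R-matrices lie exactly along $\epsilon^\ZZ q^{-\NN^*}$, so that the hypothesis isolates the regular side for every pair $r<p$. This is where the explicit form \eqref{fundcar} of the fundamental $q$-characters enters, together with a reduction along each node $k\in I$ to the $\U_q(\hat{sl}_2)$-computation (using the Drinfeld subalgebra attached to $k$): the vanishing of the structure coefficient above is detected node by node, and in each rank-one situation the resonance occurs precisely when the ratio of spectral parameters sits in $\epsilon^\ZZ q^{-\NN^*}$. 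All the representation-theoretic weight of the theorem is concentrated here.

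Finally I would read off the \emph{moreover} part. Any morphism out of the cyclic module $V_{i_R}(a_R)\otimes\cdots\otimes V_{i_1}(a_1)$ is determined by the image of its cyclic generator, which must land in the one-dimensional weight space of weight $\omega(\prod_k Y_{i_k,(a_k)^{d_{i_k}}})$ of the target; hence $\mathrm{Hom}$ is at most one-dimensional, giving uniqueness up to a scalar, while existence is furnished by the specialized composite of the (now regular) pairwise R-matrices. The image is a quotient of the source, so it is cyclic with highest $\ell$-weight $\prod_k Y_{i_k,(a_k)^{d_{i_k}}}$; running the same argument with the orders reversed shows its dual is cyclic as well, and therefore by \cite[Section 4.10]{cp} the image is simple. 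Being a cyclic quotient with that highest $\ell$-weight, it is necessarily $L\!\left(Y_{i_1,(a_1)^{d_{i_1}}}\cdots Y_{i_R,(a_R)^{d_{i_R}}}\right)$, as claimed.
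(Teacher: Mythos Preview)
The paper does not prove this theorem. Theorem~\ref{cyc} is quoted as a known result from \cite{Chari2, mk, VaVa3}; it is used as input for the rest of the paper (notably Corollary~\ref{map} and Proposition~\ref{weyl}), but no argument for it is supplied here. So there is no ``paper's own proof'' to compare against.

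As for your sketch itself: the overall architecture (induction on $R$, control of cyclicity through the regularity of normalized $R$-matrices, and the simple-image statement via cyclic-with-cyclic-dual) is indeed the shape of the arguments in \cite{Chari2, mk, VaVa3}. However, your inductive step is where the real content lies, and what you have written does not yet carry it. The coproduct of the Drinfeld generators $x_{k,m}^-$ is not given by a clean closed formula; one only has it modulo filtration terms, and the actual proofs work either with Lusztig's braid group action on root vectors (Chari), with global bases and a careful pole analysis of the $R$-matrix (Kashiwara), or geometrically (Varagnolo--Vasserot). The assertion that ``the cross terms recover the remaining weight spaces provided a single structure coefficient does not vanish'' is not accurate: there is no single scalar whose nonvanishing suffices, and the identification of that obstruction with ``the denominator governing $\bar R_{B,A}$'' is precisely the hard theorem, not a step one can invoke. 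Likewise, the node-by-node reduction to $\U_q(\hat{sl}_2)$ is delicate because the restriction of a fundamental $V_i(a)$ to the $j$th Drinfeld $sl_2$ is in general not simple, so one cannot read off the pole locus from the rank-one case without further structural input (this is exactly what \eqref{fundcar} and the cited references supply).

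Your treatment of the \emph{moreover} clause is essentially correct: uniqueness up to scalar follows from cyclicity of the source and one-dimensionality of the target highest weight space, existence from specializing the composite of pairwise intertwiners, and simplicity of the image from the cyclic-plus-cocyclic criterion. One small point to tighten: when you dualize to get cocyclicity you should check that the spectral hypothesis survives passage to duals (it does, since duality sends $V_i(a)$ to a fundamental $V_{\bar\imath}(aq^{r^\vee h^\vee})$ and shifts all parameters by the same factor), but this deserves a sentence.
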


Note that an arbitrary dominant monomial $m$ can be factorized as a product 
$$m = Y_{i_1,a_1^{d_1}}\cdots Y_{i_R,a_R^{d_R}}$$ 
so that the ordered sequence of fundamental representations $V_{i_j}(a_j)$ satisfy the hypothesis of Theorem \ref{cyc}. Hence it implies the following\footnote{The statement of Proposition \ref{weyl} is also true for
an arbitrary cyclic module \cite{bn}, but will not be used in such a generality in the present paper.
}.

\begin{prop}\label{weyl} A cyclic tensor product of simple representations is isomorphic to a quotient
of a cyclic tensor product of fundamental representations.\end{prop}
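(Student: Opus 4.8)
The plan is to derive Proposition \ref{weyl} as a direct consequence of Theorem \ref{cyc} together with the factorization of dominant monomials. First I would set up the situation: let $S_1\otimes\cdots\otimes S_N$ be a cyclic tensor product of simple modules, so it is generated by the tensor product $v = v_1\otimes\cdots\otimes v_N$ of the highest $l$-weight vectors $v_j$ of the $S_j$. Its highest weight monomial is then $M = M(S_1)\cdots M(S_N)$, a dominant monomial. The key observation is that each $M(S_j)$, being dominant, can itself be written as a product of the fundamental monomials $Y_{i,a^{d_i}}$, so that $M$ admits a factorization $M = Y_{i_1,a_1^{d_{i_1}}}\cdots Y_{i_R,a_R^{d_{i_R}}}$.

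The crucial step is to arrange this factorization so that the associated ordered sequence of fundamental representations $V_{i_R}(a_R),\ldots,V_{i_1}(a_1)$ satisfies the hypothesis $a_p a_r^{-1}\notin \epsilon^\ZZ q^{-\NN^*}$ for $r < p$ of Theorem \ref{cyc}. I expect this to be the main obstacle, since it is a genuine combinatorial claim about ordering the finitely many fundamental factors appearing in $M$: one must check that the relation ``$a\,b^{-1}\in\epsilon^\ZZ q^{-\NN^*}$'' can be made to respect a total ordering of the factors, i.e. that one can linearly order them so that all the forbidden relations point in the allowed direction. This follows because the relation restricted to the factors of $M$ is acyclic — there is no cycle $a_{k_1}a_{k_2}^{-1}, a_{k_2}a_{k_3}^{-1},\ldots\in\epsilon^\ZZ q^{-\NN^*}$ returning to $a_{k_1}$, as summing the exponents would force $q^{-s}\in\epsilon^\ZZ$ with $s\in\NN^*$, contradicting that $q$ is not a root of unity. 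Any acyclic relation extends to a total order, which gives the required sequence; this is precisely the content asserted in the sentence preceding the proposition in the excerpt.

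Once such an ordering is fixed, Theorem \ref{cyc} provides a surjective morphism from the cyclic tensor product of fundamental representations onto the simple module $L(M)$. To finish, I would relate $L(M)$ back to $S_1\otimes\cdots\otimes S_N$: since the latter is cyclic with highest weight monomial $M$, the submodule generated by its highest $l$-weight vector is all of $S_1\otimes\cdots\otimes S_N$, and it is a highest $l$-weight module with the same highest monomial $M$, hence it too is a quotient of the universal (Weyl) module attached to $M$, which is the cyclic tensor product of fundamental representations furnished by Theorem \ref{cyc}. Concretely, the composition of the generating map from the Weyl module onto the cyclic fundamental tensor product with the canonical surjection realizes $S_1\otimes\cdots\otimes S_N$ as a quotient of the cyclic tensor product of fundamental representations.

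The technical heart is therefore entirely in the verification that the fundamental factors of $M$ admit an admissible ordering, together with the standard fact that a cyclic (highest $l$-weight) module with a given dominant highest monomial is a quotient of the corresponding cyclic tensor product of fundamentals. The latter is a routine application of the universal property of highest $l$-weight modules combined with Theorem \ref{cyc}; the former is the only place where one genuinely uses that $q$ is not a root of unity.
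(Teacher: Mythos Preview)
Your proposal is correct and matches the paper's approach, which is literally the single sentence preceding the proposition: factorize the highest weight monomial, order the fundamental factors so that Theorem \ref{cyc} applies (your acyclicity argument is the right justification), and conclude. One point of precision: the final step --- that any highest $\ell$-weight module with monomial $M$ is a quotient of the ordered tensor product of fundamentals --- is the Weyl-module universal property, which is not part of Theorem \ref{cyc} as stated but is established in the same references (and in \cite{bn}, cf.\ the paper's footnote); you should cite it as such rather than saying it is ``furnished by Theorem \ref{cyc}''.
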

Let $W = L(m)$ and $W' = L(m')$ simple representations.

As a direct consequence of Theorem \ref{cyc}, we get the following (see \cite{h4} for more comments).

\begin{cor}\label{map} Suppose that $m, m'\in\mathcal{M}$ are such that $u_{i,a^{d_i}}(m) \neq 0$, $i\in I$, $a\in\CC^*$ implies that $u_{j,\left((aq^s\epsilon^k)^{d_j}\right)}(m') = 0$ for all $j\in I$, $s > 0$, $k\in\ZZ$. Then $W\otimes W'$ is cyclic and there exists a non-zero morphism of $\U_q(\Glie)$-modules, unique up to a constant multiple
$$\mathcal{I}_{W,W'} : W\otimes W' \rightarrow W'\otimes W.$$
Its image is simple isomorphic to $L(mm')$.
\end{cor}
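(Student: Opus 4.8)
The plan is to deduce cyclicity from Theorem \ref{cyc} by replacing $W$ and $W'$ with cyclic tensor products of fundamental representations, and then to obtain the intertwiner from the general $R$-matrix formalism together with cyclicity.

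First I would factor the highest monomials. Write $m = Y_{i_1,a_1^{d_{i_1}}}\cdots Y_{i_R,a_R^{d_{i_R}}}$ and $m' = Y_{j_1,b_1^{d_{j_1}}}\cdots Y_{j_S,b_S^{d_{j_S}}}$, choosing the orderings so that, as in the remark following Theorem \ref{cyc}, the sequences $(V_{i_R}(a_R),\ldots,V_{i_1}(a_1))$ and $(V_{j_S}(b_S),\ldots,V_{j_1}(b_1))$ each satisfy the hypothesis of Theorem \ref{cyc}. Then $\tilde W := V_{i_R}(a_R)\otimes\cdots\otimes V_{i_1}(a_1)$ is cyclic with head $L(m) = W$, so there is a surjection $\tilde W\twoheadrightarrow W$, and likewise a surjection $\tilde W'\twoheadrightarrow W'$ with $\tilde W' := V_{j_S}(b_S)\otimes\cdots\otimes V_{j_1}(b_1)$.

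The crux is to check that the concatenation $\tilde W\otimes\tilde W'$ still satisfies the hypothesis of Theorem \ref{cyc}, the $W$-factors being placed to the left of the $W'$-factors. Pairs of factors both coming from $\tilde W$, or both from $\tilde W'$, are controlled by the two chosen factorizations. For a mixed pair, a factor $V_i(a)$ of $\tilde W$ sits to the left of a factor $V_j(b)$ of $\tilde W'$, so the required condition is $a b^{-1}\notin\epsilon^\ZZ q^{-\NN^*}$, i.e. $b\neq a q^r\epsilon^k$ for all $r>0$ and $k\in\ZZ$. Since $u_{i,a^{d_i}}(m)\neq 0$ and $u_{j,b^{d_j}}(m')\neq 0$ for the parameters actually occurring, this is exactly the hypothesis of the corollary. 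Hence $\tilde W\otimes\tilde W'$ is cyclic with head $L(mm')$ by Theorem \ref{cyc}. Tensoring the two surjections gives a surjection $\tilde W\otimes\tilde W'\twoheadrightarrow W\otimes W'$; as a quotient of a cyclic module, $W\otimes W'$ is cyclic, and since $mm'$ is its highest monomial its head is $L(mm')$.

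Finally, for the intertwiner I would invoke the $R$-matrix properties recalled in this section: once $W\otimes W'$ is cyclic, the normalized $R$-matrix specializes to a nonzero morphism $\mathcal{I}_{W,W'}: W\otimes W'\to W'\otimes W$ sending the product $v_m\otimes v_{m'}$ of highest $l$-weight vectors to a nonzero highest $l$-weight vector of $l$-weight $mm'$. Uniqueness up to a constant follows because the $l$-weight space $(W'\otimes W)_{mm'}$ is one-dimensional (spanned by $v_{m'}\otimes v_m$, using Remark \ref{sumprod}): any nonzero morphism must send $v_m\otimes v_{m'}$ into this line and is then determined by cyclicity. The image is generated by a highest $l$-weight vector of $l$-weight $mm'$, hence has head $L(mm')$; that it is in fact simple isomorphic to $L(mm')$ follows from the standard $R$-matrix argument of \cite{h4}. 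The main obstacle is the bookkeeping in the crux step: correctly matching the left/right ordering convention of Theorem \ref{cyc} with the asymmetric hypothesis on $m$ and $m'$, and ensuring the translation to the ratio condition $ab^{-1}\notin\epsilon^\ZZ q^{-\NN^*}$ ranges exactly over the occurring spectral parameters.
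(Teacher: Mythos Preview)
Your proposal is correct and follows essentially the same approach as the paper: the paper gives no proof beyond the sentence ``As a direct consequence of Theorem \ref{cyc}, we get the following (see \cite{h4} for more comments)'', and your argument---factoring $m$ and $m'$ into fundamentals, concatenating, and checking that the hypothesis of the corollary is exactly what makes the mixed pairs satisfy the ratio condition in Theorem \ref{cyc}---is a faithful elaboration of that deduction. Your bookkeeping is right: with $V_i(a)$ from $\tilde W$ on the left and $V_j(b)$ from $\tilde W'$ on the right, Theorem \ref{cyc} requires $a/b\notin\epsilon^{\ZZ}q^{-\NN^*}$, i.e.\ $b\neq aq^r\epsilon^k$ for $r>0$, which is precisely the hypothesis on $m$ and $m'$.

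One small remark: for the existence, uniqueness and simple-image assertions about $\mathcal{I}_{W,W'}$, you appeal to the $R$-matrix specialization discussed \emph{after} the corollary in the paper. A slightly more self-contained route, closer in spirit to ``direct consequence of Theorem \ref{cyc}'', is to use the morphism that Theorem \ref{cyc} already provides for the fundamental product $\tilde W\otimes\tilde W'\to\tilde W'\otimes\tilde W$ (with simple image $L(mm')$) and observe that it descends through the surjections $\tilde W\twoheadrightarrow W$, $\tilde W'\twoheadrightarrow W'$; uniqueness then follows, as you note, from the one-dimensionality of the $l$-weight space of weight $mm'$. Either way the conclusion is the same, and your reference to \cite{h4} for the simplicity of the image matches the paper's own citation.
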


For $a\in\CC^*$ generic, we have an isomorphim of $\U_q(\Glie)$-modules 
$$\mathcal{T}_{W,W'}(a) : W\otimes W'(a)\rightarrow W'(a)\otimes W.$$
Here $a$ generic means that $a$ is in the complement of a finite set of $\CC^*$ (which depends on $W$ and $W'$) and 
$W'(a)$ is the twist of $W'$ by the algebra automorphism $\tau_a$ of $\U_q(\Glie)$ 
defined \cite{cp} on the Drinfeld generators by 
$$\tau_a\left(x^\pm_{i,m}\right)=a^{\pm m} x^\pm_{i,m},\ \
\tau_a\left(h_{i,r}\right)=a^r h_{i,r},\ \ \tau_a\left(k_i^{\pm
1}\right)=k_i^{\pm 1},\ \ \tau_a\left(c^{\pm\frac{1}{2}}\right) = c^{\pm \frac{1}{2}}.$$
  
Considering $a$ as a variable $z$, we get a rational map in $z$
$$\mathcal{T}_{W,W'}(z) : (W\otimes W')\otimes\CC(z)\rightarrow (W'\otimes W)\otimes \CC(z).$$
This map is normalized so that for $v\in W$, $v'\in W'$ highest weight vectors, we have 
$$(\mathcal{T}_{W,W'}(z))(v\otimes v') = v'\otimes v.$$ 
The map $\mathcal{T}_{W,W'}(z)$ is invertible and 
\begin{equation}\label{invt}(\mathcal{T}_{W,W'}(z))^{-1} = \mathcal{T}_{W',W}(z^{-1}).\end{equation}
If $W\otimes W'$ is cyclic, $\mathcal{T}_{W,W'}$
has a limit at $z = 1$ which is denoted by $\mathcal{I}_{W,W'}$ as above (and which is not invertible in general). 

Note that $\mathcal{T}_{W,W'}(z)$ defines a morphism of representation of 
$$U_{q,z}(\Glie) = \U_q(\Glie)\otimes\CC(z)$$ 
if the action on $W'$ is twisted by the automorphism $\tau_z$ of $U_{q,z}(\Glie)$ defined as $\tau_a$ with $a$ replaced by the formal variable $z$. The corresponding $U_{q,z}(\Glie)$-module $W'\otimes \CC(z)$ is denoted by $W'(z)$. See \cite{Hbou} for references.

\begin{rem}\label{yb} By results of Drinfeld, it is well-known (see e.g. \cite[Section 12.5.B]{Cha2}) that the intertwiners $\mathcal{T}_{W,W'}(z)$ can be obtained from the universal $R$-matrix of $\U_q(\Glie)$ which is 
a solution of the Yang-Baxter equation.  This implies the hexagonal relation : for 
$U$, $V$, $W$ representations as above we have a commutative diagram :
\begin{equation}\label{diagYB}\xymatrix{ &V\otimes U \otimes W\ar[r]^{\text{Id}\otimes \mathcal{T}_{U,W}(zw)}&V\otimes W\otimes U\ar[dr]^{\mathcal{T}_{V,W}(w)\otimes\text{Id}}&   
\\ U\otimes V\otimes W \ar[ur]^{\mathcal{T}_{U,V}(z)\otimes\text{Id}}\ar[dr]_{\text{Id}\otimes \mathcal{T}_{V,W}(w)}& &  & W\otimes V\otimes U
\\ & U\otimes W\otimes V\ar[r]_{\mathcal{T}_{U,W}(zw)\otimes\text{Id}}& W\otimes U\otimes V \ar[ur]_{\text{Id}\otimes \mathcal{T}_{U,V}(z)}& }.\end{equation}
Here the tensor products are taken over $\CC(z,w)$ (for clarity we have omitted the variables $z,w$ in the diagram).
\end{rem}

\begin{rem}\label{naivedefo} If $W\otimes W'$ is cyclic, the morphism $\mathcal{I}_{W,W'}$ has also the naive deformation
$$\mathcal{I}_{W,W'}(z) : (W\otimes W')\otimes\CC(z)\rightarrow (W'\otimes W)\otimes\CC(z)$$
obtained by extension of scalars from $\CC$ to $\CC(z)$. It is a morphism of $U_{q,z}(\Glie)$-modules (the action on $W$ and $W'$ are {\it not} twisted). It is an isomorphism if and only $\mathcal{I}_{W,W'}$ is an isomorphism.
\end{rem}

\begin{rem}\label{regular} Let 
$$\mathcal{A}\subset \CC(z)$$ 
be the ring of rational functions without pole at $z = 1$  and 
$$\U_{q,\mathcal{A}} = \U_q(\Glie)\otimes\mathcal{A}.$$ 
For $S$ simple in $\mathcal{F}$, we have the $\U_{q,\mathcal{A}}(\Glie)$-modules $S_{\mathcal{A}} = S\otimes \mathcal{A}$ and 
$$S_{\mathcal{A}}(z) = \U_{q,\mathcal{A}}(\Glie).v\subset S\otimes \CC(z)$$
where $v$ is a highest weight vector of $S$ and the action is twisted by $\tau_z$ in $S\otimes \CC(z)$.
This is a $\mathcal{A}$-module of rank $\text{dim}(S)$, that is an $\mathcal{A}$-lattice in $S\otimes\CC(z)$.
Suppose that $W\otimes W'$ is cyclic.  Then the morphism $\mathcal{T}_{W,W'}(z)$, $\mathcal{I}_{W,W'}(z)$ make sense as morphisms of $\U_{q,\mathcal{A}}(\Glie)$-modules between the $\mathcal{A}$-lattices :
$$\mathcal{T}_{W,W'}(z) : W_{\mathcal{A}} \otimes_{\mathcal{A}} W_{\mathcal{A}}'(z)\rightarrow
  W_{\mathcal{A}}'(z)\otimes_{\mathcal{A}} W_{\mathcal{A}}.$$
$$\mathcal{I}_{W,W'}(z) : W_{\mathcal{A}} \otimes_{\mathcal{A}} W_{\mathcal{A}}'\rightarrow
  W_{\mathcal{A}}'\otimes_{\mathcal{A}} W_{\mathcal{A}}.$$
This clear for $\mathcal{I}_{W,W'}(z)$. For $\mathcal{T}_{W,W'}(z)$, let $v\in  W_{\mathcal{A}} \otimes_{\mathcal{A}} W_{\mathcal{A}}'(z)$ highest weight.
We may assume that 
$$\mathcal{T}_{W,W'}(z)(v)\in  W_{\mathcal{A}}' \otimes_{\mathcal{A}} W_{\mathcal{A}}.$$ 
As $W\otimes W'$ is cyclic, it suffices to check that for $g\in \U_q(\Glie)$, 
$$\mathcal{T}_{W,W'}(z)(g.v)\in W_{\mathcal{A}}'(z)\otimes_{\mathcal{A}} W_{\mathcal{A}}.$$ 
This is clear as $\mathcal{T}_{W,W'}(z)$ is a morphism.
\end{rem}

\section{Subcategories and reductions}\label{redu}

In Section \ref{redu} we reduce the problem by considering certain remarkable subcategories $\mathcal{C}_{\mathbb{Z}}$, $\mathcal{C}_\ell$ which have a compatibility property with $q$-characters (Theorem \ref{useqt}).

\subsection{The category $\mathcal{C}_{\mathbb{Z}}$}

\begin{defi} The category $\mathcal{C}_{\mathbb{Z}}$ is the full subcategory of objects in $\mathcal{F}$ whose Jordan-H\"older composition factors are simple representations $V$ satisfying 
$$M(V)\in \ZZ\left[Y_{i,\left(q^l\epsilon^k\right)^{d_i}}^{\pm 1}\right]_{i\in I, l,k\in\ZZ}.$$ 
\end{defi}

The category $\mathcal{C}_{\mathbb{Z}}$ is particularly important as the description of the simple objects of $\mathcal{F}$ essentially reduces to the description of the simple objects of $\mathcal{C}_{\mathbb{Z}}$, see \cite[Section 3.7]{hl}.

\begin{prop}\label{reduc1} It suffices to prove the statement of Theorem \ref{maincyc} for
simple representations $S_i$ in the category $\mathcal{C}_{\mathbb{Z}}$.
\end{prop}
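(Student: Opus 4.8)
The plan is to reduce to $\mathcal{C}_{\mathbb{Z}}$ by splitting spectral parameters into non-interacting classes and twisting each class onto the standard one. I would work with the equivalence relation on base spectral parameters $a\sim b \iff ab^{-1}\in\epsilon^\ZZ q^\ZZ$; its classes index $\mathcal{C}_\ZZ$ (the class of $1$) together with its twists under the automorphisms $\tau_a$. Throughout I use that $\tau_a$ is a Hopf algebra automorphism, so twisting preserves cyclicity and commutes with tensor products, and that the class structure of $q$-characters (visible in \eqref{fundcar}) makes all spectral parameters of a single-class module lie in one coset; twisting by the appropriate $\tau_{c^{-1}}$ then lands such a module in $\mathcal{C}_\ZZ$.

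First I would establish a single-class factorization. For a simple $S=L(m)$, group $m=\prod_\gamma m_\gamma$ according to the class $\gamma$ of its spectral parameters, obtaining simple modules $S^{(\gamma)}=L(m_\gamma)$ each supported on one class. For $\gamma\neq\gamma'$ two base parameters never differ by $q^r\epsilon^k$, so the hypothesis of Corollary \ref{map} holds in both orders; hence $S^{(\gamma)}\otimes S^{(\gamma')}$ and $S^{(\gamma')}\otimes S^{(\gamma)}$ are both cyclic, and a cyclic module with cyclic dual is simple (\cite[Section 4.10]{cp}), so these products are simple and isomorphic. By the simplicity criterion of \cite{h4} the product $\bigotimes_\gamma S^{(\gamma)}$ is then simple with highest monomial $m$, giving $S\cong\bigotimes_\gamma S^{(\gamma)}$ with the factors freely permutable.

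Now assume Theorem \ref{maincyc} holds in $\mathcal{C}_\ZZ$, and take arbitrary simple $S_1,\dots,S_N$ with $S_i\otimes S_j$ cyclic for $i<j$. Reordering the cross-class factors gives $S_i\otimes S_j\cong\bigotimes_\gamma\bigl(S_i^{(\gamma)}\otimes S_j^{(\gamma)}\bigr)$. A tensor product can be cyclic only if each factor is: if $M\otimes N$ is generated by the tensor of highest weight vectors and $M'=\U_q(\Glie)\cdot v_M\subseteq M$ is the submodule the first one generates, then $M'\otimes N$ is a submodule containing that generator, forcing $M'=M$. Applied to the grouping above, each $S_i^{(\gamma)}\otimes S_j^{(\gamma)}$ is cyclic. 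Twisting into $\mathcal{C}_\ZZ$ and applying the assumed theorem class by class, each block $B_\gamma:=S_1^{(\gamma)}\otimes\cdots\otimes S_N^{(\gamma)}$ is cyclic; untwisting preserves this.

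Finally I would assemble the blocks. By Proposition \ref{weyl} each cyclic $B_\gamma$ is a quotient of a cyclic tensor product $F_\gamma$ of fundamental representations supported on class $\gamma$. Concatenating these (classes grouped, each $F_\gamma$ kept in its good internal order) yields a tensor product of fundamentals satisfying the hypothesis of Theorem \ref{cyc}: within a block the order is good by construction, while across blocks the base parameters lie in distinct classes so $a_pa_r^{-1}\notin\epsilon^\ZZ q^{-\NN^*}$ automatically. Hence $\bigotimes_\gamma F_\gamma$ is cyclic, and the surjection $\bigotimes_\gamma F_\gamma\twoheadrightarrow\bigotimes_\gamma B_\gamma$ carries its generating highest weight vector to a generator, so $\bigotimes_\gamma B_\gamma$ is cyclic; reordering back to $S_1\otimes\cdots\otimes S_N$ finishes the reduction. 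I expect this last assembly step to be the main obstacle, since the blocks $B_\gamma$ are not simple and Corollary \ref{map} cannot be applied to them directly — passing through Weyl modules via Proposition \ref{weyl} is precisely what renders the distinct classes non-interacting at the level of cyclicity.
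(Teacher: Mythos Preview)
Your argument is correct and follows essentially the same route as the paper's proof: factor each simple module into its single-class pieces via Corollary \ref{map} and the cyclic/cocyclic simplicity criterion, use the cross-class isomorphisms to regroup the big tensor product by class, transfer the pairwise cyclicity hypothesis to each class, and then assemble the cyclic class-blocks back together by lifting each to a cyclic Weyl module via Proposition \ref{weyl} and invoking Theorem \ref{cyc} on the concatenation. The only cosmetic differences are that you spell out the twisting by $\tau_a$ into $\mathcal{C}_\ZZ$ explicitly and give a direct argument for ``cyclicity of a tensor product forces cyclicity of each tensorand,'' whereas the paper packages both directions as an if-and-only-if and appeals to Corollary \ref{map} (rather than the main result of \cite{h4}) for the simplicity of $\bigotimes_\gamma S^{(\gamma)}$.
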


\begin{proof} Let $S = L(M)$ be a simple representation. Then there is a unique factorization\footnote{
This factorization might be seen as an analog of the Steinberg theorem in modular representation theory.}
$$M = \prod_{a\in \CC^*/(q^\ZZ\epsilon^\ZZ)}M_a$$
where 
$$M_a\in \ZZ\left[Y_{i,\left(aq^l\epsilon^k\right)^{d_i}}^{\pm 1}\right]_{i\in I, l,k\in\ZZ}.$$
Then $S$ has a factorization into simple modules
$$S \simeq \bigotimes_{a\in \CC^*/(q^\ZZ\epsilon^\ZZ)} (S)_a$$
where 
$$S_a = L(M_a).$$
Indeed the irreducibility of this tensor product follows from Corollary \ref{map} and from the fact that
a cyclic module whose dual is cyclic is simple (see \cite[Section 4.10]{cp}).

Now it suffices to prove that for $S_1,\cdots, S_N$ simple modules, the module
$$S_1\otimes \cdots \otimes S_N$$
is cyclic if and only if 
$$(S_1)_a\otimes \cdots \otimes (S_N)_a$$ is cyclic for any 
$a\in \CC^*/(q^\ZZ\epsilon^\ZZ)$.
 
Note that for any simple module $S,S'$ and for any $a\neq b\in \CC^*/(q^\ZZ\epsilon^\ZZ)$, we have a morphism 
$$I_{(S)_a, (S')_b} : (S)_a\otimes (S')_b\simeq (S')_b\otimes (S)_a.$$ 
It is an isomorphism as $I_{(S')_b,S_a}$ is also well-defined. Hence  
$$S_1\otimes \cdots \otimes S_N\simeq 
\bigotimes_{a\in \CC^*/(q^\ZZ\epsilon^\ZZ)} (S_1)_a\otimes \cdots \otimes (S_N)_a.$$
In particular, the "only if" part is clear. Conversely, by Proposition \ref{weyl}, for each $a\in \CC^*/(q^\ZZ\epsilon^\ZZ)$
there is a cyclic tensor product $W_a$ of fundamental representations with a morphism
$$\phi_a : W_a\rightarrow (S_1)_a\otimes \cdots \otimes (S_N)_a$$
which is surjective by hypothesis. But 
$$W = \bigotimes_{a\in \CC^*/(q^\ZZ\epsilon^\ZZ)}W_a$$ 
is cyclic by Theorem \ref{cyc} and 
$$\bigotimes_{a\in \CC^*/(q^\ZZ\epsilon^\ZZ)}\phi_a : W\rightarrow \bigotimes_{a\in \CC^*/(q^\ZZ\epsilon^\ZZ)} (S_1)_a\otimes \cdots \otimes (S_N)_a\simeq S_1\otimes \cdots \otimes S_N $$ 
is surjective. Hence the result.
\end{proof}

\subsection{The categories $\mathcal{C}_\ell$}\label{catint}

\begin{defi} Given a non-negative integer $\ell$, $\mathcal{C}_\ell$ is the full subcategory of $\mathcal{C}_{\mathbb{Z}}$ whose Jordan-H\"older composition factors are simple representations $V$ satisfying 
$$M(V)\in \mathcal{Y}_1 = \ZZ\left[Y_{i,\left(q^l\epsilon^k\right)^{d_i}}\right]_{i\in I,0\leq l\leq \ell,k\in\ZZ}.$$
\end{defi}

The categories $\mathcal{C}_\ell$ are stable under tensor product (see \cite[Lemma 4.10]{h4}). 
Up to a twist by the automorphism $\tau_{a}$ for a certain $a\in\CC^*$, a family of simple modules in $\mathcal{C}_{\mathbb{Z}}$ is in a subcategory $\mathcal{C}_\ell$ (see \cite{hl} and \cite[Section 4.2]{h4}).
Hence as in \cite[Section 4.2]{h4}, we see that it suffices to prove the statement for the categories $\mathcal{C}_\ell$.

The statement of Theorem \ref{maincyc} is clear for the category $\mathcal{C}_0$ as all simple objects of $\mathcal{C}_0$ are tensor products of fundamental representations in $\mathcal{C}_0$. Moreover an arbitrary tensor product of simple objects in $\mathcal{C}_0$ is simple (see \cite[Lemma 4.11]{h4}).

\subsection{Upper $q$-characters} Let us remind a technical result about the "upper" part of the $q$-character
of a simple module.

Fix $L\in\ZZ$. For a dominant monomial $m\in\Yim_1$, we denote by $m^{\leq L}$ the product with multiplicities of the factors $Y_{i,\left(\epsilon^k q^l\right)^{d_i}}^{\pm 1}$ occurring in $m$ with $l\leq L$, $i\in I$, $k\in\ZZ$. We also set 
$$m = m^{\geq L}m^{\leq L - 1}.$$

\begin{defi}\label{lu} The upper $q$-character $\chi_{q,\geq L}(V)$ is the sum with multiplicities of the monomials $m$ occurring in $\chi_q(V)$ satisfying
$$m^{\leq (L-1)} = M^{\leq (L - 1)}.$$
\end{defi}

The following technical result will be useful in the following.

\begin{thm}\label{useqt}\cite{h4} 
For $M\in\Yim_1$ a dominant monomial and $L\in\ZZ$, we have
$$\chi_{q,\geq L}(L(M)) = M^{\leq (L - 1)}\chi_q\left(L\left(M^{\geq L}\right)\right).$$
\end{thm}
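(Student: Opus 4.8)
The plan is to realize $L(M)$ concretely as the simple quotient of a two-fold tensor product and to compute its upper $q$-character from the multiplicative $q$-character of that product. Set $N = M^{\geq L}$ and $P = M^{\leq L-1}$, so $M = NP$, every variable occurring in $N$ has parameter $\geq L$ and every variable occurring in $P$ has parameter $\leq L-1$. First I would verify that $(L(N),L(P))$ meets the hypothesis of Corollary \ref{map}: a factor $Y_{i,a^{d_i}}$ of $N$ has $a = \epsilon^{k_0}q^{l_0}$ with $l_0 \geq L$, so the interaction it would forbid concerns a factor $Y_{j,(aq^r\epsilon^k)^{d_j}}$ of $P$ with $r>0$, i.e. of parameter $l_0 + r \geq L+1$, which $P$ does not possess. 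Hence $L(N)\otimes L(P)$ is cyclic with head $L(NP) = L(M)$, yielding a surjection $\pi : L(N)\otimes L(P)\twoheadrightarrow L(M)$ and, term by term, $\chi_q(L(M)) \leq \chi_q(L(N))\,\chi_q(L(P))$.

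Next I would control parameters through the fundamental-character formula \eqref{fundcar}, whose key feature is that each $A_{j,b}^{-1}$ it allows carries a parameter strictly larger than that of the variable it is applied to. From this I would deduce that every monomial of $\chi_q(L(N))$ involves only variables of parameter $\geq L$ (the downward spread $q^{-r_j}$ inside a relevant $A_{j,b}$ is exactly offset by the positive shift forced in \eqref{fundcar}, so nothing descends below $L$), and, dually, that among the monomials of $\chi_q(L(P))$ the highest one $P$ is the unique monomial whose part of parameter $\leq L-1$ is all of $P$. Granting these two facts, a product monomial $m_N m_P$ of $\chi_q(L(N))\chi_q(L(P))$ has low part $m_N^{\leq L-1}m_P^{\leq L-1} = m_P^{\leq L-1}$, which equals $P$ exactly when $m_P = P$; hence the slice of the tensor-product character with low part $P$ is precisely $P\,\chi_q(L(N))$, and $\pi$ already gives $\chi_{q,\geq L}(L(M)) \leq P\,\chi_q(L(N))$.

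It then remains to prove the reverse inequality, namely that $\ker\pi$ contains no $l$-weight of low part $P$, so that the whole slice $P\,\chi_q(L(N))$ survives in the simple quotient. This is the step I expect to be the main obstacle, because the assignment $m \mapsto m^{\leq L-1}$ is not monotone along the $A^{-1}$-order — an adjacent factor $A_{j,b}^{-1}$ can create a positive variable of low parameter — so a comparison of weights alone will not isolate the top slice. Here Theorem \ref{prodlweight} is the decisive tool: for an $l$-weight vector $w$ of $L(N)\otimes L(P)$ whose $l$-weight has low part $P$, expressing $w$ in the form required by that theorem and invoking the maximality of the low weight $\omega(P)$ forces $M(w)$ to be the product of $P$ with an $l$-weight of $L(N)$ and forces the corresponding vectors to project nontrivially to $L(M)$; this pins down the top slice of $L(M)$ as $P\,\chi_q(L(N))$ and supplies the missing inequality. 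The points demanding the most care are the uniqueness statement for $P$ inside $\chi_q(L(P))$ and the precise application of Theorem \ref{prodlweight} to match multiplicities, together with checking the two parameter assertions in the twisted types, where the monomials $A_{j,b}$ take the modified shape recalled earlier.
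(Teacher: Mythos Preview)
The present paper does not prove this theorem: it is quoted without proof from \cite{h4}, so there is no argument here against which to compare your attempt.

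On your sketch itself: the forward inequality $\chi_{q,\geq L}(L(M)) \leq P\,\chi_q(L(N))$ via the surjection $\pi$ is reasonable once the two parameter claims are secured, though you should notice that the uniqueness of $P$ inside $\chi_q(L(P))$ is literally the special case $M=P$ of the statement you are proving (since then $M^{\geq L}=1$ and the assertion becomes $\chi_{q,\geq L}(L(P))=P$), so your derivation of it from \eqref{fundcar} must be genuinely self-contained; in the non--simply-laced and twisted cases this requires more care than your parenthetical suggests. The real gap is the reverse inequality. Theorem~\ref{prodlweight} constrains the $l$-weight $M(w)$ of a vector in a tensor product and, with an additional minimal-weight argument, can force such a $w$ to lie in the subspace $L(N)\otimes v_P$ (this is exactly how it is used in the proof of Proposition~\ref{new} in the present paper). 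But it says nothing about whether $w$ survives in the simple head $L(M)$: knowing $w\in L(N)\otimes v_P$ does not prevent $w\in\ker\pi$, and nothing in your argument distinguishes $L(M)$ from any other subquotient of $L(N)\otimes L(P)$. Your clause ``forces the corresponding vectors to project nontrivially to $L(M)$'' is therefore an assertion rather than a deduction, and the multiplicity matching you need does not follow. The proof in \cite{h4} does not proceed via this tensor-product realisation; it relies instead on an elimination-type result for the monomials occurring in $q$-characters of simple modules.
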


\section{End of the proof of Theorem \ref{maincyc}}\label{endpf}

In this section we finish the proof of Theorem \ref{maincyc}. We first prove a preliminary result (Proposition \ref{new}) asserting that the cyclicity of a tensor product $S\otimes S'$ implies the cyclicity of a certain submodule $S_+\otimes S_+'$. Then we prove the result by using an induction on $N$ and on the size of a subcategory $\mathcal{C}_\ell$ : we prove the cyclicity of an intermediate module in Lemma \ref{vcyc} and the final arguments make intensive use of intertwiners and of their deformations derived from $R$-matrices.

\subsection{Preliminary result}

Let $S$ be a simple module in $\mathcal{C}_\ell$ of highest weight monomial $M$. Let 
$$M_- = M^{\leq 0}\text{ and }M_+ = M^{\geq 1 }\text{ so that }M = M_+ M_-.$$ 
Set 
$$S_\pm = L(M_\pm).$$ 
Recall the surjective morphism of Corollary \ref{map}.
$$\mathcal{I}_{S_+,S_-} : S_+\otimes S_-\twoheadrightarrow S \subset S_-\otimes S_+.$$

\begin{prop}\label{new} Let $S$, $S'$ simple objects in $\mathcal{C}_\ell$ such that the tensor product $S\otimes S'$ is cyclic. Then the tensor product $S_+\otimes S'_+$ is cyclic.
\end{prop}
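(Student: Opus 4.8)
The plan is to isolate the top spectral layer of the cyclic module $S\otimes S'$ and to identify it, as a $\U_q(\Glie)$-module up to a harmless character shift, with $S_+\otimes S'_+$, so that cyclicity descends. Write $M=M_+M_-$ and $M'=M'_+M'_-$ with $M_\pm,M'_\pm$ as in the statement (so $M_-,M'_-$ are supported in layer $0$ and $M_+,M'_+$ in layers $\geq 1$). Recall from Corollary \ref{map} the surjections $\mathcal{I}_{S_+,S_-}\colon S_+\otimes S_-\twoheadrightarrow S$ and $\mathcal{I}_{S'_+,S'_-}\colon S'_+\otimes S'_-\twoheadrightarrow S'$; they realize $S$ (resp. $S'$) as the submodule of $S_-\otimes S_+$ (resp. $S'_-\otimes S'_+$) generated by $v_-\otimes v_+$ (resp. $v'_-\otimes v'_+$), where $v_\pm,v'_\pm$ are highest weight vectors. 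Thus $S\otimes S'\subseteq S_-\otimes S_+\otimes S'_-\otimes S'_+$ is, by hypothesis, generated by $w_0=v_-\otimes v_+\otimes v'_-\otimes v'_+$.

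First I would record the character backbone. Applying Theorem \ref{useqt} to $S=L(M)$ and to $S'=L(M')$ with $L=1$ gives $\chi_{q,\geq 1}(S)=M_-\,\chi_q(S_+)$ and $\chi_{q,\geq 1}(S')=M'_-\,\chi_q(S'_+)$. I would then check that the top truncation is multiplicative on this tensor product, namely that a monomial $mm'$ of $\chi_q(S)\chi_q(S')=\chi_q(S\otimes S')$ has frozen bottom $(mm')^{\leq 0}=(MM')^{\leq 0}=M_-M'_-$ only when $m^{\leq 0}=M_-$ and $m'^{\leq 0}=M'_-$; this rests on the fact (see \eqref{fundcar}) that the correcting factors $A_{i,a}^{-1}$ sit in layers $\geq 1$, so that $M_-M'_-$ is the maximal attainable bottom and is attained only at the two individual maxima. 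This yields
\[
\chi_{q,\geq 1}(S\otimes S')=M_-M'_-\,\chi_q(S_+)\,\chi_q(S'_+)=M_-M'_-\,\chi_q(S_+\otimes S'_+),
\]
so in particular the defect-zero subspace $T=\bigoplus_{m:\,m^{\leq 0}=M_-M'_-}(S\otimes S')_m$ has dimension $\dim S_+\cdot\dim S'_+$.

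Next I would transport cyclicity from $S\otimes S'$ to $S_+\otimes S'_+$. Freezing the factors $S_-,S'_-$ at their one-dimensional top $l$-weight spaces $\CC v_-,\CC v'_-$ defines a linear isomorphism $P\colon T\xrightarrow{\ \sim\ }S_+\otimes S'_+$ carrying $w_0$ to the highest weight vector $v_+\otimes v'_+$. The crux is that $P$ intertwines the $\U_q(\Glie)$-action on the top part with the action on $S_+\otimes S'_+$, once the frozen character $M_-M'_-$ is accounted for. Granting this, $P\bigl(\U_q(\Glie)\,w_0\bigr)=\U_q(\Glie)\,(v_+\otimes v'_+)$, while cyclicity of $S\otimes S'$ forces $\U_q(\Glie)\,w_0\supseteq T$ and hence $\U_q(\Glie)\,(v_+\otimes v'_+)=P(T)=S_+\otimes S'_+$, which is the desired cyclicity.

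The main obstacle is exactly this intertwining property: $T$ is not a submodule of $S\otimes S'$ (the raising operators both enter and leave the defect-zero subspace), so the identification of its module structure with $S_+\otimes S'_+$ is not formal and is where the real work lies. I expect to establish it with the $R$-matrix machinery of Section \ref{interw}: the braiding $\mathcal{T}_{S,S'}$ is, by naturality with respect to the inclusions above, the restriction of $\mathcal{T}_{S_-\otimes S_+,\,S'_-\otimes S'_+}$, which by the hexagon relation (Remark \ref{yb}) factors into the normalized maps between the four pieces $S_\pm,S'_\pm$; combining this with the $\mathcal{A}$-lattice formalism of Remark \ref{regular} should let me propagate regularity at $z=1$ from $\mathcal{T}_{S,S'}$ to $\mathcal{T}_{S_+,S'_+}$, the latter being equivalent to cyclicity of $S_+\otimes S'_+$. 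The delicate ingredient is the cross term $\mathcal{T}_{S_-,S'_+}$, which need not be regular at $z=1$; controlling it by passing to the quotients $S,S'$, where the offending layer-$0$ directions of $S_-,S'_-$ are annihilated, is the heart of the argument.
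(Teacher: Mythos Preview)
Your character computation is correct and matches the paper: $\chi_{q,\geq 1}(S\otimes S')=M_-M'_-\,\chi_q(S_+\otimes S'_+)$. The gap is exactly where you flag it. The map $P$ is only defined on $T$, and $T$ is not a submodule, so the line ``$P(\U_q(\Glie)\,w_0)=\U_q(\Glie)\,(v_+\otimes v'_+)$'' has no meaning: $\U_q(\Glie)\,w_0$ is all of $S\otimes S'$, not contained in $T$, and there is no action on $T$ for $P$ to intertwine. Your proposed rescue via regularity of $\mathcal{T}_{S_+,S'_+}(z)$ at $z=1$ is a valid reformulation of the goal, but the sketch you give does not control the cross term $\mathcal{T}_{S_-,S'_+}$, and nothing in the hexagon plus the $\mathcal{A}$-lattice formalism forces its potential pole to be cancelled.

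The paper avoids all of this by a change of perspective that you should adopt. Work in $S_+\otimes S'_+\otimes S_-\otimes S'_-$ (the $+$ factors on the \emph{left}) with the surjection to $S\otimes S'$, and let $W$ be the submodule generated by the highest weight vector. Since $W$ surjects onto $S\otimes S'$, every monomial of $M_-M'_-\,\chi_q(S_+\otimes S'_+)$ occurs in $\chi_q(W)$ with at least the right multiplicity. Now apply Theorem~\ref{prodlweight} to $(S_+\otimes S'_+)\otimes(S_-\otimes S'_-)$: any $\ell$-weight vector whose $\ell$-weight lies in $M_-M'_-\,\chi_q(S_+\otimes S'_+)$ must sit in $S_+\otimes S'_+\otimes v$, where $v$ is the highest weight vector of $S_-\otimes S'_-$ (otherwise an $A_{i,a}^{-1}$ with $a$ in a forbidden layer would appear). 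Comparing dimensions gives $S_+\otimes S'_+\otimes v\subset W$. Finally, if $W'\subset S_+\otimes S'_+$ is the submodule generated by $v_+\otimes v'_+$, then $W'\otimes(S_-\otimes S'_-)$ is a genuine submodule containing the highest weight vector, hence containing $W$; intersecting with $S_+\otimes S'_+\otimes v$ forces $W'=S_+\otimes S'_+$. No $R$-matrix deformation is needed for this proposition.
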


\begin{proof} 
  Consider the morphism
$$ (\mathcal{I}_{S_+,S_-}\otimes \mathcal{I}_{S_+',S_-'})\circ(\text{Id}\otimes \mathcal{I}_{S_+',S_-}\otimes\text{Id}):$$
$$S_+\otimes S'_+\otimes S_-\otimes S_-'\rightarrow S_+\otimes S_-\otimes S_+'\otimes S_-'\rightarrow S\otimes S'.$$
Then, as $S\otimes S'$ is cyclic, it is isomorphic to a quotient of the submodule 
$$W\subset S_+\otimes S'_+\otimes S_-\otimes S_-'$$ 
generated by an highest weight vector. Hence monomials have higher multiplicities in $\chi_q^{\geq 1}(W)$ than in
$$\chi_q^{\geq 1}(S\otimes S') = \chi_q^{\geq 1}(S)\chi_q^{\geq 1}(S') = \chi_q(S_+\otimes S'_+)M_-M_-'.$$ 
By Formula (\ref{fundcar}) and Theorem \ref{useqt}, we have :
$$\chi_q^{\geq 1}(S_-) = M_-\text{ and }\chi_q^{\geq 1}(S_+) = \chi_q(S_+) = (M_-)^{-1}\chi_q^{\geq 1}(S).$$
Consider an $\ell$-weight vector 
$$w\in S_+\otimes S'_+\otimes S_-\otimes S_-'$$ 
whose $\ell$-weight $\gamma$ contributes to 
$$\chi_q(S_+\otimes S'_+)M_-M_-'.$$ 
Then by Theorem \ref{prodlweight} we have
$$w\in  S_+\otimes S'_+\otimes v$$ 
where $v$ is a highest weight vector of $S_-\otimes S_-'$. Otherwise $\gamma$ would be a product 
$$\gamma = \gamma_1 \times  \gamma_2$$
of an $\ell$-weight $\gamma_1$ of $S_+\otimes S_+'$ by an $\ell$-weight $\gamma_2$ of $S_-\otimes S_-'$ satisfying $\gamma_2 \neq M_-M_-'$. Consider the factorization of $M_-M_-'\gamma_2 ^{-1}$ as a product of $A_{i,a}$ as explained at the end of section \ref{fdrap}. Note that the $A_{i,a}$ are algebraically independent \cite{Fre, h8}. Then by Formula (\ref{fundcar}) at least one factor $A_{i,a}$ would occur with 
$$i\in I\text{ and }a\notin \epsilon^{d_i\ZZ}q^{d_i(1 + \NN) + \mu_ir_i}.$$ 
This is a contradiction with Theorem \ref{useqt} as $\gamma$ occurs in 
$$\chi_q(S_+\otimes S'_+)M_-M_-'.$$ 
We have proved
$$S_+\otimes S'_+\otimes v \subset W.$$ 
This implies the result.
\end{proof}

\subsection{Final arguments}

We prove Theorem \ref{maincyc} by induction on $\ell$ and on $N$. As explained above in section \ref{catint}, we have checked the result for $\ell = 0$. Moreover the result is trivial for $N = 2$.

Let $S_1, \cdots , S_N$ as in the statement of Theorem \ref{maincyc}. First let us prove the following.

\begin{lem}\label{vcyc} The module
$$V = S_{1,+}\otimes S_2\otimes \cdots \otimes S_{N}\otimes S_{1,-}$$ 
is cyclic. 
\end{lem}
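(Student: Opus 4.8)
The goal is to show that $V = S_{1,+}\otimes S_2\otimes\cdots\otimes S_N\otimes S_{1,-}$ is cyclic, given (by induction on $N$) that all shorter tensor products with pairwise cyclic factors are cyclic, and given (by induction on $\ell$) that the statement holds for smaller subcategories. My strategy is to exploit the factorization $S_1=L(M_{1,+}M_{1,-})$ together with the surjection $\mathcal{I}_{S_{1,+},S_{1,-}}\colon S_{1,+}\otimes S_{1,-}\twoheadrightarrow S_1$ of Corollary \ref{map}, and to transport the known cyclicity of the tensor products involving $S_1$ across this factorization using $R$-matrix intertwiners.

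\smallskip

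The first step is to observe that $S_{1,+}$ and $S_{1,-}$ each lie in a strictly smaller piece of the category than $S_1$: by construction $M_{1,-}=M_1^{\geq 0}$ is concentrated at level $0$, so $S_{1,-}\in\mathcal{C}_0$, while $M_{1,+}=M_1^{\geq 1}$ sits (after a twist by $\tau_{q^r}$) in a $\mathcal{C}_\ell$ of strictly smaller size. Thus both the $N$-fold products $S_{1,+}\otimes S_2\otimes\cdots\otimes S_N$ and the data involving $S_{1,-}$ fall under one of the two induction hypotheses. The plan is therefore: first establish, using the induction hypothesis on $\ell$ (applied to the family $S_{1,+},S_2,\dots,S_N$, whose pairwise products remain cyclic by Proposition \ref{new}), that
$$V' = S_{1,+}\otimes S_2\otimes\cdots\otimes S_N$$
is cyclic. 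Then I would splice $S_{1,-}$ back in on the right.

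\smallskip

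The second step is to move $S_{1,-}$ into position. Since $S_{1,-}\in\mathcal{C}_0$ has highest weight monomial $M_{1,-}$ supported only at the lowest level, a level/weight argument via Theorem \ref{prodlweight} and Theorem \ref{useqt} — exactly parallel to the computation in the proof of Proposition \ref{new} — should show that the factor $S_{1,-}$ placed on the far right cannot be lowered relative to the top part of $V'$: any $\ell$-weight of $V$ contributing at the relevant level must have its $S_{1,-}$-component equal to the highest weight vector of $S_{1,-}$. Concretely, writing $\chi_q^{\geq 1}(V)=\chi_q(V')\,M_{1,-}$ and comparing with the upper $q$-character formula, the only way to realize the top of the upper character is with $S_{1,-}$ untouched at its highest weight vector, so the submodule generated by the highest weight vector of $V$ already contains $V'\otimes v_{1,-}$ for $v_{1,-}$ a highest weight vector of $S_{1,-}$. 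Combined with the cyclicity of $V'$ from the first step, this forces $V$ to be cyclic.

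\smallskip

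The main obstacle I expect is the \emph{bookkeeping with the intertwiners} needed to reduce the cyclicity of $V'=S_{1,+}\otimes S_2\otimes\cdots\otimes S_N$ to the induction hypothesis. It is not enough to know that the individual pairwise products $S_{1,+}\otimes S_j$ are cyclic; I must verify that these pairwise products satisfy the hypotheses in the \emph{smaller} category so that the induction on $\ell$ genuinely applies, and I must check that replacing $S_1$ by $S_{1,+}$ preserves cyclicity of the products $S_{1,+}\otimes S_j$ for every $j\geq 2$. This is precisely where Proposition \ref{new} enters: it guarantees that cyclicity of $S_1\otimes S_j$ passes to cyclicity of $S_{1,+}\otimes S_{j,+}$, and one then needs to reattach the low-level parts $S_{j,-}$ and verify compatibility using Corollary \ref{map} (the condition on supports of $M_{1,+}$ versus $M_{j,-}$) together with the hexagonal Yang–Baxter relation of Remark \ref{yb}. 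Making this reduction rigorous — tracking which intertwiners are isomorphisms and which are merely surjective, and ensuring the deformed intertwiners $\mathcal{T}(z)$ remain regular at $z=1$ as in Remark \ref{regular} — is the delicate technical core of the argument.
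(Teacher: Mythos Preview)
Your plan has a genuine gap in the first step. You propose to apply the induction hypothesis on $\ell$ to the family $S_{1,+},S_2,\dots,S_N$ in order to conclude that $V'=S_{1,+}\otimes S_2\otimes\cdots\otimes S_N$ is cyclic. But this family does not lie in any smaller $\mathcal{C}_{\ell'}$: only $S_{1,+}$ has been truncated, while $S_2,\dots,S_N$ remain full modules in $\mathcal{C}_\ell$. So the $\ell$-induction does not apply, and since the family still has $N$ members the $N$-induction does not apply either. (Incidentally, Proposition \ref{new} yields cyclicity of $S_{1,+}\otimes S_{j,+}$, not of $S_{1,+}\otimes S_j$, so even the pairwise hypothesis you invoke is not what that proposition delivers.)

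The fix --- which you gesture toward in your ``main obstacle'' paragraph but do not actually commit to --- is to factorize \emph{every} $S_j$, not just $S_1$. This is what the paper does. One first builds a surjection
\[
S_{1,+}\otimes S_{2,+}\otimes\cdots\otimes S_{N,+}\otimes S_{2,-}\otimes\cdots\otimes S_{N,-}\otimes S_{1,-}\;\twoheadrightarrow\; V
\]
by composing the intertwiners $\mathcal{I}_{S_{i,+},S_{j,-}}$ for $i\neq j$ (these exist by Corollary \ref{map}) and then the surjections $\mathcal{I}_{S_{i,+},S_{i,-}}$. Now the $+$-block $S_{1,+}\otimes\cdots\otimes S_{N,+}$ genuinely lies (after a twist) in $\mathcal{C}_{\ell-1}$; Proposition \ref{new} supplies the pairwise cyclicity $S_{i,+}\otimes S_{j,+}$, so the induction on $\ell$ gives cyclicity of the whole $+$-block, and Proposition \ref{weyl} realizes it as a quotient of a cyclic product $W$ of fundamentals. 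All the $S_{j,-}$ are tensor products of fundamentals in $\mathcal{C}_0$, so $W\otimes S_{2,-}\otimes\cdots\otimes S_{N,-}\otimes S_{1,-}$ is itself a tensor product of fundamentals satisfying the spectral-parameter condition of Theorem \ref{cyc}, hence cyclic. This makes the left-hand side above cyclic and hence $V$ cyclic, with no need for the $\ell$-weight argument you propose in your second step.
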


\begin{proof}
By Proposition \ref{new}, the induction hypothesis on $N$ implies that the modules
$$S_2\otimes \cdots \otimes S_{N}\text{ and }S_{2,+}\otimes \cdots \otimes S_{N,+}$$
are cyclic. Besides, it follows from Corollary \ref{map} that for $i\neq j$, 
$$S_{i,+}\otimes S_{j,-}$$ 
is cyclic.
Composing applications 
$$\text{Id}\otimes \mathcal{I}_{S_{i,+},S_{j,-}}\otimes \text{Id}$$ 
for $i\neq j$, we get a surjective morphism
$$\phi : (S_{2,+}\otimes \cdots \otimes S_{N,+})\otimes (S_{2,-}\otimes\cdots \otimes S_{N,-})\rightarrow S_2\otimes \cdots \otimes S_N.$$
Hence the map 
$$\text{Id}\otimes \phi \otimes \text{Id}$$
$$V'= S_{1,+}\otimes (S_{2,+}\otimes\cdots \otimes S_{N,+}\otimes S_{2,-}\otimes \cdots \otimes S_{N,-})\otimes S_{1,-} \rightarrow V$$
is surjective. 

By Proposition \ref{new}, the induction hypothesis on $\ell$ implies that the module
$$S_{1,+}\otimes S_{2,+}\otimes\cdots \otimes S_{N,+}$$ 
is cyclic. By Proposition \ref{weyl}, it is isomorphic to a quotient of a cyclic tensor product $W$ of fundamental representations.
Moreover 
$$S_{2,-}\otimes \cdots \otimes S_{N,-}$$
is a simple tensor product of fundamental representations. Then
$$W\otimes S_{2,-}\otimes \cdots \otimes S_{N,-}$$
is a cyclic tensor product of fundamental representation which admits $V'$ as a quotient. Hence $V'$ is cyclic and $V$ is cyclic.
\end{proof}

Now let us end the proof of Theorem \ref{maincyc}.

We have a sequence of maps
$$(\mathcal{I}_{S_{1,+},S_{1,-}}\otimes \text{Id})\circ \cdots \circ(\text{Id}\otimes \mathcal{I}_{S_{N-1,S_{1,-}}} \otimes \text{Id})\circ (\text{Id}\otimes  \mathcal{I}_{S_N,S_{1,-}})$$
$$V\rightarrow S_{1,+}\otimes S_2\otimes \cdots \otimes S_{N-1}\otimes S_{1,-} \otimes S_N
\rightarrow S_{1,+}\otimes S_2\otimes \cdots S_{N-2}\otimes S_{1,-}\otimes S_{N-1}\otimes S_N$$
$$\rightarrow \cdots \rightarrow S_{1,+}\otimes S_{1,-}\otimes S_2\otimes \cdots S_N\rightarrow S_1\otimes\cdots \otimes S_N.$$
Let us replace each module 
$$S_{1,+}\otimes S_2\otimes \cdots \otimes S_i\otimes S_{1,-}\otimes S_{i+1}\otimes \cdots \otimes S_N$$ 
by its quotient 
\begin{equation}\label{ker}\overline{S_{1,+}\otimes S_2\otimes \cdots \otimes S_i\otimes S_{1,-}\otimes S_{i+1}\otimes \cdots \otimes S_N} = S_{1,+}\otimes S_2\otimes \cdots \otimes S_i\otimes S_{1,-}\otimes S_{i+1}\otimes \cdots \otimes S_N/\text{Ker}\end{equation}
by the kernel\footnote{By abuse of notation, we will use the same symbol for the kernels in several tensor products as it does not lead to confusion.} $\text{Ker}$ of the morphism 
$$(\mathcal{I}_{S_{1,+},S_{1,-}}\otimes \text{Id})\circ \cdots \circ(\text{Id}\otimes \mathcal{I}_{S_{i,S_{1,-}}}\otimes \text{Id})$$
to $S_1\otimes \cdots \otimes S_N$. In the following we call such a quotient a bar-module.

 We get a composition 
$$\mathcal{I}_2\circ \mathcal{I}_3\circ\cdots \circ \mathcal{I}_N$$ 
of well-defined morphisms of $\U_q(\Glie)$-modules :
$$\overline{S_{1,+}\otimes S_2\otimes \cdots \otimes S_{N}\otimes S_{1,-}}
\overset{\mathcal{I}_N}{\rightarrow} \overline{S_{1,+}\otimes S_2\otimes \cdots \otimes S_{N-1}\otimes S_{1,-}\otimes S_{N}}$$
$$\overset{\mathcal{I}_{N-1}}{\rightarrow} \cdots \overset{\mathcal{I}_2}{\rightarrow} \overline{S_{1,+}\otimes S_{1,-}\otimes S_2\otimes\cdots S_N} \simeq S_1\otimes S_2\otimes\cdots S_N.$$
Note that by construction, each morphism 
$$\mathcal{I}_{2}\circ \mathcal{I}_3\circ\cdots \circ \mathcal{I}_j$$ 
is injective.

As we have established in Lemma \ref{vcyc} that $V$ is cyclic, it suffices to prove that each morphism $\mathcal{I}_j$ is surjective to get the cyclicity of $S_1\otimes \cdots \otimes S_N$.

Let us start with $\mathcal{I}_2$. 
As $S_1\otimes S_2$ is cyclic, the composed map
$$S_{1,+}\otimes S_2\otimes S_{1,-}\rightarrow S_{1,+}\otimes S_{1,-}\otimes S_2\rightarrow S_1\otimes S_2$$ 
is surjective. In particular $\mathcal{I}_2$ is an isomorphism of $\U_q(\Glie)$-modules. 

Now let us focus on $\mathcal{I}_3$. As $\mathcal{I}_2\circ \mathcal{I}_3$ is injective and we just proved that $\mathcal{I}_2$ is an isomorphism, $\mathcal{I}_3$ is injective and it suffices to establish that $\mathcal{I}_3$ is surjective. We proceed in two steps : we first establish that a certain deformation of $\mathcal{I}_3$ is surjective, and in a second step we prove that it implies that $\mathcal{I}_3$ itself is surjective by a specialization argument.

{\it First step} 

In the same way as for $\mathcal{I}_2$, we have an isomorphism
$$\mathcal{J}_3 : \overline{S_2\otimes S_{1,+}\otimes S_3\otimes S_{1,-}\otimes S_4\otimes\cdots   \otimes S_N}\rightarrow \overline{S_2\otimes S_1\otimes S_3 \otimes \cdots   \otimes S_N}$$
where the bar means as above that we consider the quotients by the kernels (of the morphisms to $S_2\otimes S_1\otimes S_3\otimes\cdots \otimes S_N$ this time).

To deduce informations on $\mathcal{I}_3$ from $\mathcal{J}_3$, we consider the morphism
$$\mathcal{I}_{S_{1,+},S_2} : S_{1,+}\otimes S_2
\rightarrow S_2\otimes S_{1,+}.$$
This map is not invertible in general, that is why we consider its deformation
$$A(z) = \mathcal{T}_{S_{1,+},S_2}(z) :  S_{1,+}\otimes S_2(z)\rightarrow S_2(z)\otimes S_{1,+}$$
as in section \ref{interw}. It is an isomorphism of $U_{q,z}(\Glie)$-modules, with the action on $S_2$ twisted by $\tau_z$.

Note that the actions on the modules $S_{1,+}$ and $S_{1,-}$ are not twisted, so we can use the morphisms $\mathcal{I}_{.,.}(z)$ as in Remark \ref{naivedefo}. The map $A(z)$ is compatible with the quotient defining the bar modules above. This means that the kernel in 
$$S_{1,+}\otimes S_2 (z) \otimes S_3\otimes S_{1,-}\otimes \cdots \otimes S_{N}\text{ (resp. in }S_{1,+}\otimes S_2(z) \otimes S_{1,-}\otimes S_3\otimes \cdots \otimes S_{N})$$ is sent by $A(z)\otimes \text{Id}$ to the kernel in 
$$S_2(z)\otimes S_{1,+}\otimes S_3\otimes S_{1,-}\otimes\cdots   \otimes S_N\text{ (resp. in }S_2(z)\otimes S_{1,+}\otimes S_{1,-}\otimes S_3 \otimes \cdots   \otimes S_N).$$
Indeed it follows from Remark \ref{yb} that the following diagram is commutative (the $\otimes$ between modules and the $\otimes\text{Id}$ are omitted for clarity of the diagram) :
$$\xymatrix{ &S_{1,+}S_2(z)S_{1,-}S_3\ar[r]^{\mathcal{T}_{S_2,S_{1,-}}(z^{-1})}&S_{1,+}S_{1,-}S_2(z)S_3\ar[r]^{\mathcal{I}_{S_{1,+},S_{1,-}}(z)}&S_1S_2(z)S_3   
\\ S_{1,+}S_2(z)S_2S_{1,-} \ar[ur]^{ \mathcal{I}_{S_2,S_{1,-}}(z) }\ar[dr]_{A(z)}& &  & 
\\ & S_2(z)S_{1,+}S_3S_{1,-}\ar[r]_{ \mathcal{I}_{S_3,S_{1,-}}(z)}& S_2(z)S_{1,+}S_{1,-}S_3 \ar[r]_{\mathcal{I}_{S_{1,+},S_{1,-}}(z)}& S_2(z)S_1S_3\ar[uu]^{\mathcal{T}_{S_2,S_1}(z^{-1})}}.$$
where we used the relation (\ref{invt}) to inverse $\mathcal{T}_{S_1,S_2}(z)$.

So $A(z)$ induces isomorphisms\footnote{By abuse of notation with use the same symbol $\overline{A}(z)$ for them when it does not lead to confusion.} $\overline{A}(z)$ :
$$\begin{xymatrix}{
\overline{S_{1,+}\otimes S_2 (z) \otimes S_3\otimes S_{1,-}\otimes \cdots \otimes S_{N}}\ar[r]^-{\mathcal{I}_3(z)}\ar[d]^{\overline{A}(z)} & \overline{S_{1,+}\otimes S_2(z) \otimes S_{1,-}\otimes \cdots \otimes S_{N}}\ar[d]^{\overline{A}(z)}
\\ 
\overline{S_2(z)\otimes S_{1,+}\otimes S_3\otimes S_{1,-}\otimes\cdots   \otimes S_N}\ar[r]^-{\mathcal{J}_3(z)} & \overline{S_2(z)\otimes S_1\otimes S_3 \otimes \cdots   \otimes S_N}}
\end{xymatrix}$$
satisfying
\begin{equation}\label{conjr}\mathcal{I}_3(z) = (\overline{A}(z))^{-1}\mathcal{J}_3(z) \overline{A}(z).\end{equation}

With obvious notations, we have
$$\overline{S_2(z)\otimes S_{1,+}\otimes S_3\otimes S_{1,-}\otimes \cdots \otimes S_N} \simeq S_2(z)\otimes \overline{S_{1,+}\otimes S_3\otimes S_{1,-}\otimes \cdots \otimes S_N},$$
$$\overline{S_2(z)\otimes S_1\otimes S_3 \otimes \cdots   \otimes S_N}\simeq S_2(z)\otimes \overline{S_1\otimes S_3 \otimes \cdots   \otimes S_N}.$$
In particular $\mathcal{J}_3(z)$ is obtained from $\mathcal{J}_3$ just by the extension of scalars from $\CC$ to $\CC(z)$. This implies that $\mathcal{J}_3(z)$ is an isomorphism of $U_{q,z}(\Glie)$-modules.

The conjugation relation (\ref{conjr}) implies that $\mathcal{I}_3(z)$ is also an isomorphism of 
$U_{q,z}(\Glie)$-modules.

{\it Second step} 

The relation between $\mathcal{I}_3(z)$ and $\mathcal{I}_3$ is not as direct as for $\mathcal{I}_2$ above, that is why we use the ring $\mathcal{A}$ as in Remark \ref{regular} in order to define specialization maps.
 The map $\mathcal{I}_3(z)$ and the maps used to
define the corresponding bar-modules make sense not only over $\mathbb{C}(z)$ but also over $\mathcal{A}\subset \CC(z)$, see remark \ref{regular}. $\mathcal{I}_3(z)$ is obtained just by the extension of
scalars from $\mathcal{A}$ to $\CC(z)$ of an isomorphism of $\mathcal{A}$-modules $\mathcal{I}_{3,\mathcal{A}}(z)$. So we work with $\mathcal{A}$-lattices as defined in remark \ref{regular} and we consider the specialization map 
$$\pi : S_{1,+,\mathcal{A}}\otimes_\mathcal{A} S_{2,\mathcal{A}} (z) \otimes_{\mathcal{A}} S_{3,\mathcal{A}}\otimes_{\mathcal{A}} S_{1,-,\mathcal{A}}\otimes_{\mathcal{A}} \cdots \otimes_{\mathcal{A}} S_{N,\mathcal{A}}\rightarrow S_{1,+}\otimes S_2 \otimes S_3\otimes S_{1,-}\otimes \cdots \otimes S_{N},$$
taking the quotient by 
$$(z-1)  S_{1,+,\mathcal{A}}\otimes_\mathcal{A} S_{2,\mathcal{A}} (z) \otimes_{\mathcal{A}} S_{3,\mathcal{A}}\otimes_{\mathcal{A}} S_{1,-,\mathcal{A}}\otimes_{\mathcal{A}} \cdots \otimes_{\mathcal{A}} S_{N,\mathcal{A}}.$$
Then the kernel 
$$\text{Ker}_{z,\mathcal{A}} \subset S_{1,+,\mathcal{A}}\otimes_\mathcal{A} S_{2,\mathcal{A}} (z) \otimes_{\mathcal{A}} S_{3,\mathcal{A}}\otimes_{\mathcal{A}} S_{1,-,\mathcal{A}}\otimes_{\mathcal{A}} \cdots \otimes_{\mathcal{A}} S_{N,\mathcal{A}}
$$ 
used to define the bar-module is contained in $\pi^{-1}(\text{Ker})$ where 
$$\text{Ker} \subset S_{1,+}\otimes S_2  \otimes S_3\otimes S_{1,-}\otimes \cdots \otimes S_{N}$$ 
is as in Formula (\ref{ker}). We have the following situation :
$$ S_{1,+,\mathcal{A}}\otimes_\mathcal{A} S_{2,\mathcal{A}} (z) \otimes_{\mathcal{A}} S_{3,\mathcal{A}}\otimes_{\mathcal{A}} S_{1,-,\mathcal{A}}\otimes_{\mathcal{A}} \cdots \otimes_{\mathcal{A}} S_{N,\mathcal{A}}
\supset\pi^{-1}(\text{Ker})  \supset \text{Ker}_{z,\mathcal{A}}.$$
 This is analog for $S_{1,+,\mathcal{A}}\otimes_\mathcal{A} S_{2,\mathcal{A}}(z)\otimes_{\mathcal{A}} S_{1,-,\mathcal{A}}\otimes_{\mathcal{A}} S_{3,\mathcal{A}} \otimes_{\mathcal{A}} \cdots   \otimes_{\mathcal{A}} S_{N,\mathcal{A}}$. 
We have a morphism of $U_{q,\mathcal{A}}(\Glie)$-modules
$$\mathcal{I}_{3,\mathcal{A}}(z) : S_{1,+,\mathcal{A}}\otimes_{\mathcal{A}} S_{2,\mathcal{A}} (z) \otimes_{\mathcal{A}} S_{3,\mathcal{A}}\otimes_{\mathcal{A}} S_{1,-,\mathcal{A}}\otimes_{\mathcal{A}} \cdots \otimes_{\mathcal{A}} S_{N,\mathcal{A}}/\text{Ker}_{z,\mathcal{A}}$$
$$\rightarrow S_{1,+,\mathcal{A}}\otimes_{\mathcal{A}} S_{2,\mathcal{A}}(z)\otimes_{\mathcal{A}} S_{1,-,\mathcal{A}}\otimes_{\mathcal{A}} S_{3,\mathcal{A}} \otimes_{\mathcal{A}} \cdots   \otimes_{\mathcal{A}} S_{N,\mathcal{A}}/\text{Ker}_{z,\mathcal{A}}.$$ 
It is surjective after extensions of scalars, that is :
\begin{equation}\label{decomp}S_{1,+,\mathcal{A}}\otimes_{\mathcal{A}} S_{2,\mathcal{A}}(z)\otimes_{\mathcal{A}} S_{1,-,\mathcal{A}}\otimes_{\mathcal{A}} S_{3,\mathcal{A}} \otimes_{\mathcal{A}} \cdots   \otimes_{\mathcal{A}} S_{N,\mathcal{A}}\otimes_{\mathcal{A}}\CC(z)\end{equation} 
$$= \text{Ker}_{z,\mathcal{A}}\otimes_{\mathcal{A}}\CC(z) + \text{Im}(\tilde{\mathcal{I}}_{3,\mathcal{A}}(z))\otimes_{\mathcal{A}}\CC(z),$$
where $\tilde{\mathcal{I}}_{3,\mathcal{A}}(z)$ is defined as $\mathcal{I}_{3,\mathcal{A}}(z)$ :
$$\tilde{\mathcal{I}}_{3,\mathcal{A}}(z) : S_{1,+,\mathcal{A}}\otimes_{\mathcal{A}} S_{2,\mathcal{A}} (z) \otimes_{\mathcal{A}} S_{3,\mathcal{A}}\otimes_{\mathcal{A}} S_{1,-,\mathcal{A}}\otimes_{\mathcal{A}} \cdots \otimes_{\mathcal{A}} S_{N,\mathcal{A}}$$
$$\rightarrow S_{1,+,\mathcal{A}}\otimes_{\mathcal{A}} S_{2,\mathcal{A}}(z)\otimes_{\mathcal{A}} S_{1,-,\mathcal{A}}\otimes_{\mathcal{A}} S_{3,\mathcal{A}} \otimes_{\mathcal{A}} \cdots   \otimes_{\mathcal{A}} S_{N,\mathcal{A}}.$$ 
Note that 
$$(z - 1)^{\mathbb{Z}}\text{Ker}_{z,\mathcal{A}}\cap (S_{1,+,\mathcal{A}}\otimes_{\mathcal{A}} S_{2,\mathcal{A}}(z)\otimes_{\mathcal{A}} S_{1,-,\mathcal{A}}\otimes_{\mathcal{A}} S_{3,\mathcal{A}} \otimes_{\mathcal{A}} \cdots   \otimes_{\mathcal{A}} S_{N,\mathcal{A}}) = \text{Ker}_{z,\mathcal{A}},$$
$$(z - 1)^{\mathbb{Z}}\text{Im}(\tilde{\mathcal{I}}_{3,\mathcal{A}}(z))\cap (S_{1,+,\mathcal{A}}\otimes_{\mathcal{A}} S_{2,\mathcal{A}}(z)\otimes_{\mathcal{A}} S_{1,-,\mathcal{A}}\otimes_{\mathcal{A}} S_{3,\mathcal{A}} \otimes_{\mathcal{A}} \cdots   \otimes_{\mathcal{A}} S_{N,\mathcal{A}}) = \text{Im}(\tilde{\mathcal{I}}_{3,\mathcal{A}}(z)).$$
This is clear for the kernel, and the image of $\tilde{\mathcal{I}}_{3,\mathcal{A}}(z)$ is obtained just by the scalar extension from $\CC$ to $\mathcal{A}$. 

Consequently, Equation (\ref{decomp}) implies
\begin{equation}\label{decompa}S_{1,+,\mathcal{A}}\otimes_{\mathcal{A}} S_{2,\mathcal{A}}(z)\otimes_{\mathcal{A}} S_{1,-,\mathcal{A}}\otimes_{\mathcal{A}} S_{3,\mathcal{A}} \otimes_{\mathcal{A}} \cdots   \otimes_{\mathcal{A}} S_{N,\mathcal{A}} = \text{Ker}_{z,\mathcal{A}} + \text{Im}(\tilde{\mathcal{I}}_{3,\mathcal{A}}(z)).\end{equation}
Indeed by (\ref{decomp}), $\lambda$ in the left hand side can be written as
$$\lambda = (z-1)^{-a} \alpha + (z-1)^{-b}\beta$$ 
where $a,b$ are minimal non-negative integers such that $\alpha \in \text{Ker}_{z,\mathcal{A}}$ and $\beta \in \text{Im}(\tilde{\mathcal{I}}_{3,\mathcal{A}}(z))$. If $a,b > 0$, this contradicts the hypothesis on $\lambda$. If $a = 0$, then $(z-1)^{-b}\beta$ is in the lattice and so $b = 0$. The argument is identical for $b = 0$. This establishes the identity.

Now by applying the map $\pi$ to Equation (\ref{decompa}), we get 
$$S_{1,+}\otimes S_2\otimes S_{1,-}\otimes S_3 \otimes \cdots   \otimes S_N = \pi(\text{Ker}_{z,\mathcal{A}}) + \pi(\text{Im}(\tilde{\mathcal{I}}_{3,\mathcal{A}}(z))),$$
and so the surjectivity of
$$\mathcal{I}_3 : S_{1,+,\mathcal{A}}\otimes_{\mathcal{A}} S_{2,\mathcal{A}} (z) \otimes_{\mathcal{A}} S_{3,\mathcal{A}}\otimes_{\mathcal{A}} S_{1,-,\mathcal{A}}\otimes_{\mathcal{A}} \cdots \otimes_{\mathcal{A}} S_{N,\mathcal{A}}/\pi^{-1}(\text{Ker})$$
$$\rightarrow S_{1,+,\mathcal{A}}\otimes_{\mathcal{A}} S_{2,\mathcal{A}}(z)\otimes_{\mathcal{A}} S_{1,-,\mathcal{A}}\otimes_{\mathcal{A}} S_{3,\mathcal{A}} \otimes_{\mathcal{A}} \cdots   \otimes_{\mathcal{A}} S_{N,\mathcal{A}}/\pi^{-1}(\text{Ker}).$$
We have established that $\mathcal{I}_3$ is an isomorphism. 

In the same way, we prove that 
$\mathcal{I}_j$ is an isomorphism by induction on $j\geq 3$. 

\begin{rem}\label{comph4} As discussed in the introduction, the result in this paper implies the main result of \cite{h4}. However, the author would like to clarify\footnote{M. Gurevich, E. Lapid and A. Minguez asked a question about an argument in \cite{h4} which had to be clarified. This is done in this Remark \ref{comph4}.} the end of the direct proof in \cite[Section 6]{h4}. Let us use the notations of that paper. As for the bar-modules above, the tensor products there have to be understood up to the kernel of the morphisms to $S_1\otimes \cdots \otimes S_N$. By the same deformation arguments as above, the surjectivity of the morphism 
$$S_N^+\otimes S_i\otimes S_N^-\rightarrow S_i\otimes S_N$$ 
implies the surjectivity at the level of bar-modules of the maps considered in the final arguments of \cite[Section 6]{h4}.\end{rem}

\subsection{Examples}

Let us give some examples to illustrate some crucial steps in the proof.

For all examples below we set $\Glie = \hat{sl}_2$.

\medskip

{\it Example 1 } : let us set 
$$S_1 = L(Y_1Y_{q^2})\text{ , }S_2 = V(1)\text{ , }S_3 = V(1).$$
For $i > j$, the $S_i\otimes S_j$ are simple. We have 
$$S_{1,+} = V(q^2)$$ 
and the kernel of $\mathcal{I}_{S_{1,+},S_2}$ is isomorphic to the trivial module of dimension $1$. In particular :
$$V(q^2)\otimes V(1)^{\otimes 3}\supset \text{Ker} \simeq V(1)^{\otimes 2}.$$
$$V(q^2)_{\mathcal{A}}\otimes_{\mathcal{A}} V_{\mathcal{A}}(z)\otimes_{\mathcal{A}} V(1)_{\mathcal{A}}^{\otimes 2}\supset \text{Ker}_{z,\mathcal{A}} \simeq V_{\mathcal{A}}(z)\otimes_{\mathcal{A}} V(1)_{\mathcal{A}}.$$
We have the deformed operator
$$A(z) : V(q^2)_{\mathcal{A}}\otimes_{\mathcal{A}} V_{\mathcal{A}}(z) \rightarrow V_{\mathcal{A}}(z)\otimes_{\mathcal{A}} V(q^2)_{\mathcal{A}} $$
which is of the form 
$$A(z) = A + \mathcal{O}(z - 1)$$ 
where $rk(A) = 3$ with 
$$A^{-1}(z) = \frac{A'}{z - 1} + \mathcal{O}(1)$$ 
where $rk(A') = 1$ (we use the standard asymptotical comparison notation $\mathcal{O}$). 
We have the isomorphism $\mathcal{J}_3$, $\mathcal{I}_3$ induced from $\mathcal{I}_{V(1),V(1)}$ :
$$\begin{xymatrix}{
\overline{V(q^2)\otimes V (z) \otimes V(1)\otimes V(1)}\ar[r]^-{\mathcal{I}_3(z)}\ar[d]^{\overline{A}(z)} & \overline{V(q^2)\otimes V(z) \otimes V(1)\otimes V(1)}\ar[d]^{\overline{A}(z)}
\\ 
\overline{V(z)\otimes V(q^2)\otimes V(1)\otimes V(1)}\ar[r]^-{\mathcal{J}_3(z)} & \overline{V(z)\otimes V(q^2)\otimes V(1)\otimes V(1) }}
\end{xymatrix}.$$
If had set instead $S_2 = V(q^2)$, $S_2\otimes S_3$ would be cyclic but not simple. Nothing would change, except that $A(0)$ would be an isomorphism and the inverse $(A(z))^{-1}$ would be regular at $0$. 

\medskip

{\it Example 2 } : let us set 
$$S_1 = L(Y_1Y_{q^2}Y_{q^4}^2)\simeq L(Y_1Y_{q^2}Y_{q^4})\otimes V(q^4)\text{ , }S_2 = V(q^2)\text{ , }S_3 = V(1).$$
$S_1\otimes S_3$ is simple. $S_1\otimes S_3$ and $S_2\otimes S_3$ are cyclic (but not simple).
 We have $S_{1,+} = L(Y_{q^2}Y_{q^4}^2)\simeq L(Y_{q^2}Y_{q^4})\otimes V(q^4)$ and 
$$L(Y_{q^2}Y_{q^4}^2)\otimes V(q^2)\otimes V(1)^{\otimes 2}\supset \text{Ker} \simeq M\otimes V(1)$$
where $M$ is of length $2$ with simple constituents isomorphic to $L(Y_{q^2}Y_{q^4}^2)$ and $V(q^4)$ 
(this follows from the study of the module $L(Y_{q^2}Y_{q^4}^2)\otimes V(q^2)\otimes V(1)$ of length $4$).
$$ (L(Y_{q^2}Y_{q^4}^2))_{\mathcal{A}}\otimes_{\mathcal{A}} V(q^2z)_{\mathcal{A}}\otimes V(1)_{\mathcal{A}}^{\otimes 2}\supset \text{Ker}_{z,\mathcal{A}} = V(q^4)_{\mathcal{A}}^{\otimes 2} \otimes_{\mathcal{A}} V(q^2z)_{\mathcal{A}}\otimes_{\mathcal{A}} V(1)_{\mathcal{A}}.$$
We have
$$A(z) :  L(Y_{q^2}Y_{q^4}^2)\otimes V(q^2z) \rightarrow V(q^2 z)\otimes  L(Y_{q^2}Y_{q^4}^2)$$
which is obtained from $\mathcal{T}_{V(q^4), V(q^2)}(z)$ and $\mathcal{T}_{L(Y_{q^2}Y_{q^4}), V(q^4)}(z)$ with $L(Y_{q^2}Y_{q^4})\otimes V(q^4)$ simple. Hence we have $A(z) = A + \mathcal{O}(z - 1)$ where $rk(A) = 12$ and 
$A^{-1}(z) = \frac{A'}{z - 1} + \mathcal{O}(1)$ where $rk(A') = 4$. As above the isomorphisms $\mathcal{J}_3$, $\mathcal{I}_3$ are induced from $\mathcal{I}_{V(1),V(1)}$.

\medskip

{\it Example 3 } :  let us set 
$$S_1 = L(Y_1Y_{q^2}Y_{q^4})\text{ , }S_2 = V(q^4)\text{ , }S_3 = V(q^2).$$
The tensor products $S_1\otimes S_2$ and $S_1\otimes S_3$ are simple. The tensor product $S_2\otimes S_3$ is cyclic (but not simple).
 We have $S_{1,+} = L(Y_{q^2}Y_{q^4})$ and
$$\text{Ker}_{z,\mathcal{A}} \simeq V(q^4) \otimes_{\mathcal{A}} V(q^4z) \otimes_{\mathcal{A}} V(q^2).$$ 
We use 
$$A(z) :  L(Y_{q^2}Y_{q^4})\otimes V(q^4z) \rightarrow V(q^4 z)\otimes  L(Y_{q^2}Y_{q^4})$$
with $A(0)$ which is an isomorphism. The inverse $(A(z))^{-1}$ is regular at $0$. We have the morphisms $\mathcal{J}_3$, $\mathcal{I}_3$ :
$$\begin{xymatrix}{
\overline{L(Y_{q^2}Y_{q^4})\otimes V (zq^4) \otimes V(q^2)\otimes V(1)}\ar[r]^-{\mathcal{I}_3(z)}\ar[d]^{\overline{A}(z)} & \overline{L(Y_{q^2}Y_{q^4})\otimes V(q^4z)\otimes V(1)\otimes V(q^2)}\ar[d]^{\overline{A}(z)}
\\ 
\overline{V(zq^4)\otimes L(Y_{q^2}Y_{q^4})\otimes V(q^2)\otimes V(1)}\ar[r]^-{\mathcal{J}_3(z)} & \overline{V(q^4z)\otimes L(Y_{q^2}Y_{q^4})\otimes V(1)\otimes V(q^2) }}
\end{xymatrix}.$$
Note that
$$\overline{V(zq^4)\otimes L(Y_{q^2}Y_{q^4})\otimes V(q^2)\otimes V(1)}\simeq V(zq^4)\otimes L(Y_1Y_{q^2}Y_{q^4})\otimes V(q^2)$$
$$\simeq \overline{V(q^4z)\otimes L(Y_{q^2}Y_{q^4})\otimes V(1)\otimes V(q^2)}$$ 
and $\mathcal{J}_3(z)$ is an isomorphism. This module is simple for generic $z$, and so $\mathcal{I}_3(z)$ is an isomorphism as well.

\end{document}